\newcommand{\HorRule}{\rule{\linewidth}{.2pt}} 
\title{Article Title} 
\author{John Smith, } 
\normalsize \usefont{OT1}{phv}{m}{n}
\def\abstract{\topsep=0pt\partopsep=0pt\parsep=0pt\itemsep=0pt\relax
\trivlist\item [\hskip\labelsep
{\usefont{OT1}{phv}{b}{n} \abstractname}]\if!\abstractname!\hskip-\labelsep\fi }
\providecommand{\keywords}[1]{\usefont{OT1}{phv}{b}{n}  Keywords: \normalfont #1}
\providecommand{\subclass}[1]{MSC 2010 Subject classification: \normalfont #1}
\let\origsection\section
\renewcommand\section{\@ifstar{\starsection}{\nostarsection}}
\newcommand\nostarsection[1]
\sectionprelude\origsection{#1}\sectionpostlude}
\newcommand\starsection[1]
\newcommand\sectionprelude{%
  \vspace{1em}
}
\newcommand\sectionpostlude{%
  \vspace{-.7em}
}
\theoremstyle{plain}
\newtheorem{lemma}{Lemma}[]
\newtheorem{coro}{Corollary}[]
\newtheorem{theorem}{Theorem}[]
\newtheorem{proposition}{Proposition}[]
\newtheorem{fact}{Fact}[]
\newcommand{\pushright}[1]{\ifmeasuring@#1\else\omit\hfill$\displaystyle#1$\fi\ignorespaces}
\theoremstyle{definition} 
	\newtheorem{definition}{Definition}[]
	\newtheorem{example}{Example}[]
	\newtheorem{remark}{Remark}[]
\renewcommand{\pushright}[1]{\ifmeasuring@#1\else\omit\hfill$\displaystyle#1$\fi\ignorespaces}
\newcommand{\0}{\mathbf{0}}
\newcommand{\1}{\mathbf{1}}
\newcommand{\A}{\mathcal A}
\newcommand{\B}{\mathcal B}
\newcommand{\C}{\mathcal C}
\newcommand{\D}{\mathcal{D}}
\newcommand{\Ee}{\mathcal{E}}
\newcommand{\E}{\mathbf{E}}
\newcommand{\J}{\mathcal{J}}
\newcommand{\K}{\mathcal{K}}
\newcommand{\M}{\mathcal{M}}
\newcommand{\OO}{\mathcal{O}}
\renewcommand{\P}{\mathbf P}
\newcommand{\XX}{\mathbb{X}}
\newcommand{\CC}{\mathbb{C}}
\newcommand{\PP}{\mathbb{P}}
\newcommand{\OP}{\mathbb{O}}
\newcommand{\<}{\preccurlyeq}
\renewcommand{\>}{\succcurlyeq}
\newcommand{\ldot}{\lower.5ex\hbox{\ensuremath{\cdot}}}
\newcommand{\ts}{\hspace{0.5pt}}
\newcommand{\tts}{\hspace{1pt}}
\newcommand{\nts}{\hspace{-0.5pt}}
\newcommand{\pa}{\hphantom{g}\nts\nts}
\newcommand{\bigtim}{\mbox{\Large $\times$}}
\DeclareSymbolFont{largesymbolsA}{U}{txexa}{m}{n}
\DeclareMathSymbol{\varprod}{\mathop}{largesymbolsA}{16}
\DeclareMathAlphabet{\pazocal}{OMS}{zplm}{m}{n}
\renewcommand{\H}[2]{\emptybar H_{\! #1}^{\; #2}}
\newcommand{\Hbar}[2]{\xxbar H_{\! \ts #1}^{\; #2}}
\newcommand{\R}[2]{\emptybar R_{#1}^{\,\ts #2}}
\newcommand{\Rbar}[2]{\xxbar R_{#1}^{\; #2}}
\renewcommand{\L}[2]{\emptybar L_{#1}^{\ts #2}}
\newcommand{\varThet}[2]{\varTheta_{\! #1}^{#2}}
\newcommand{\varThetp}[1]{\varTheta_{\! #1}'}
\newcommand{\varThetpp}[1]{\varTheta_{\! #1}''}
\newcommand{\ra}{r^{}_{\nts \A}}
\renewcommand{\r}[2]{ r_{\! #1}^{\, #2}}
\renewcommand{\o}[2]{\varrho_{#1}^{\, #2}}
\newcommand{\underdot}[1]{\underset{\accentset{\vspace{.5ex}\mbox{\large .}}{}}#1}
\newcommand{\sumsubstack}[2]{\underset{\substack{#1 \\[-1mm] #2}}{\sum}}
\newcommand{\Ncard}[1]{N^{\scriptscriptstyle{\lvert #1 \rvert}}}
\newcommand{\Ncardd}[2]{N^{\scriptscriptstyle{\lvert #1 \rvert - \lvert #2 \rvert}}}
\newcommand*\xxbar[1]{%
  \hbox{%
   \:\! \!\vbox{%
      \hrule height 0.5pt 
      \kern0.25ex
      \hbox{%
        \kern -0.2 em
        \ensuremath{#1}%
        \kern-0.1em%
         }}}} 
\newcommand*\emptybar[1]{%
  \hbox{%
   \:\! \!\vbox{%
      \hrule height 0.0pt 
      \kern0.25ex
      \hbox{%
        \kern -0.2 em
        \ensuremath{#1}%
        \kern-0.1em%
         }}}} 
\begin{document}


\title{Partitioning, duality, and linkage disequilibria in the Moran model
with recombination}
\author{Mareike Esser \thanks{E-mail: messer@techfak.uni-bielefeld.de} \;\,-- \,Sebastian Probst \thanks{E-mail: sprobst@techfak.uni-bielefeld.de} \;\,-- \,Ellen Baake \thanks{E-mail: ebaake@techfak.uni-bielefeld.de} }
\date{}

\maketitle

\thispagestyle{fancy} 

%
\begin{small}
\abstract{
The multilocus Moran model with recombination is considered, which describes the evolution of the genetic composition of a population under recombination and resampling. We investigate a marginal ancestral recombination process, where each site is sampled only in one individual and we do not make any scaling assumptions in the first place. Following the ancestry of these loci backward in time yields a partition-valued Markov process, which experiences splitting and coalescence. In the diffusion limit, this process turns into a marginalised version of the multilocus ancestral recombination graph. With the help of an inclusion-exclusion principle and so-called recombinators we show that the type distribution corresponding to a given partition may be represented in a systematic way by a sampling function. The same is true of correlation functions (known as linkage disequilibria in genetics) of all orders. \\
We prove that the partitioning process (backward in time) is dual to the Moran population process (forward in time), where the sampling function plays the role of the duality function. This sheds new light on the work of \citet{Polen}. The result also leads to a closed system of ordinary differential equations for the expectations of the sampling functions, which can be translated into expected type distributions and expected linkage disequilibria.
}

\keywords{Moran model with recombination, ancestral recombination process, linkage disequilibria, Möbius inversion, duality}

\subclass{92D10, 60J28.}

\end{small}

\section{Introduction}\label{sec:introduction}
Models that describe the evolution of finite populations
under recombination are among the major challenges in population genetics.
This article is devoted to the \emph{Moran model with recombination} (in continuous time),
which is briefly described as follows (see \citealt{Durrett,Polen}).
A chromosome  is identified with  a linear arrangement (or \emph{sequence})
of $n$ discrete positions called \textit{sites}, which are collected in the set $S=\{1,\dotsc,n\}$. A site may be understood as a nucleotide site or a gene locus. We will throughout
consider chromosomes as (haploid) \emph{individuals}, that is, we think at
the level of gametes (rather than that of  diploid individuals that
carry two copies of the genetic information). 
Site $i$ is occupied by letter $x_i \in \XX^{}_i$, where  $\XX_i$ is a finite set, $1\leqslant i \leqslant n$. If sites are nucleotide sites, a natural
choice for each $\XX_i$ is the nucleotide alphabet $\{\rm{A,G,C,T}\}$; 
if sites are gene loci,
$\XX_i$ is the set of alleles that can occur at locus $i$.
The genetic type of each individual is thus described by the sequence $ x=(x_1,x_2,\dotsc,x_n) \in \XX^{}_1 \times \dots \times \XX^{}_n =: \XX$, where
$\XX$ is the type space. Recombination means that a new individual
is formed as a `mixture' of an (ordered) pair of parents, say $x$ and $y$.
We will restrict ourselves to \emph{single-crossover recombination},
that is, the offspring inherits the leading segment (up to site $i$, for some
$1 \leqslant i < n$) from the first and
the trailing segment (after site $i$) from the second parent. The recombined
type thus is 
$( x_{\leqslant i}, y_{>i} ):= (x_1, \dotsc, x_i, y_{i+1}, \dotsc, y_n)$; we say
that a crossover has happened between sites $i$ and $i+1$. 
The sites that come from the paternal and the maternal sequence,
respectively, define a \emph{partition} ${\mathcal A}$ of $S$ into two
parts (we need not keep track
of which part was `maternal' and which was `paternal'). All partitions 
of $S$ into two ordered (or contiguous) parts 
($\A=\{\{1,2,\dotsc, i\},\{i+1, \dotsc, n\}\}, i \in S \setminus \{n\}$)
can be realised, via a single crossover event. 
Altogether, whenever an offspring is created, its sites are partitioned between parents according
to $\A$ with probability $\r{\A}{}$, where $\r{\A}{} \geqslant 0$,
$\sum_{\A \in \OP_2(S)}\r{\A}{} \leqslant 1$, and $\OP_2(S)$ is the set of all ordered
partitions of $S$ into two parts.
Let us note  that, due to the one-to-one
correspondence between elements of $S \setminus \{n\}$ and those of 
$\OP_2(S)$, the
specification of the $\r{\A}{}$ simply means that a crossover probability
is associated with each site in $S \setminus \{n\}$.
The sum $\sum_{\A \in \OP_2(S)}\r{\A}{}$ is the probability that some recombination
event takes place during reproduction. With probability
$\r{\ts\{S\}}{}=1- \sum_{\A \in \OP_2(S)}\r{\A}{}$, there is no recombination, 
in which case the offspring 
is the full copy of a single parent. 
We write $\OP_{\leqslant 2}(S):= \OP_2(S) \cup \{S\}$ for the set of ordered
partitions into at most two parts. The collection $\{\r{\A}{}\}_{\A \in \OP_{\leqslant 2}(S)}^{}$ is known as
the \emph{recombination distribution} \citep[p. 55]{Buerger}.

Consider now a \emph{population} of a constant number of $N$ haploid individuals (that is, gametes), which evolves as follows (see Figure~\ref{moran_realisation}). Each individual has an exponential lifespan with parameter $1$
(this choice of the parameter is without loss of generality; it simply sets
the time scale).
When an individual dies, it is replaced by a new one as follows.
First draw a partition $\A$ according to the recombination distribution. 
Then draw $|\A|$ parents from the population (the parents may
include the  individual that 
is about to die), uniformly and with replacement, where $|\A|$ is the number
of parts in $\A$. If $|\A|=2$, the offspring inherits the leading segment 
of $\A$ from the first and the trailing segment from the second parent, as
described above.  If $|\A|=1$ (and thus $\A=\{S\}$), 
the offspring is a full copy of a single parent 
(again chosen uniformly from among
all individuals); this
is called a \emph{(pure) resampling} event. All events
are independent of each other.

Note that it may seem biologically more realistic to draw two parents
\emph{without} replacement. However, assuming sampling \emph{with}
replacement entails significant simplifications, and yields the same
process as sampling without replacement with a slight change
in the recombination distribution. More precisely, since drawing the same
individual twice  means that the offspring is a full copy of this single
parent, our process agrees (in distribution) with the analogous process
without replacement if $\r{\A}{}$ is replaced by $\r{\A}{} (N-1)/N$ for all 
$\A \in \OP_2(S)$ (and $\r{\ts\{S\}}{}$ is set accordingly). 

\begin{figure}[ht]
\begin{center}
\includegraphics[width=.8\textwidth]{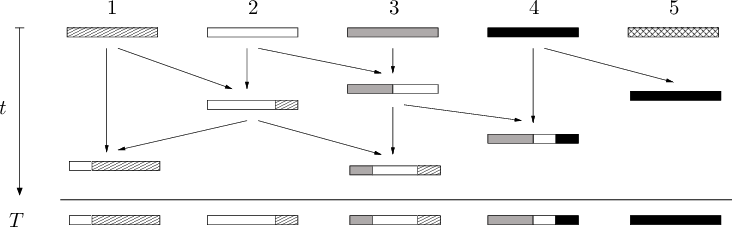}
\end{center}
\caption{\label{moran_realisation} 
Snapshot of a Moran model realisation with $N=5$ individuals. For example,
in the first event,  individual 3 dies and is replaced by a recombined  copy of individuals 2 and 3. The last line shows the  composition of the population
at the final timepoint,   $T$.}
\end{figure}

The model will be described more formally later. For now, let us summarise the two main lines of research in this context. On the one hand, there has been considerable interest in how the composition of the population evolves over time, and, in particular, how the correlations between sites (known as linkage disequilibria) will develop; see the overviews in
\citet[Chap.~5.4]{HeinSchierupWiuf}, \citet[Chap.~3.3 and 8.2]{Durrett}, or \citet[Chap.~7.2.4]{Wakeley}. 
Since there is no mutation, a single type will go to fixation
in the long run, that is, the 
entire population will ultimately consist of this single type. In the
absence of recombination, this will be one of the types initially present,
and it is well known that the fixation probability for a given type
equals its initial frequency.
If there is recombination, the type that ultimately wins
can also be a newly-composed type, but little is known about the 
fixation probabilities of the many possible types. The explicit development over
time is even more challenging, due to an intricate interplay of resampling
and recombination. It is usually approached forward
in time, e.g., \citet{OhtaKimura}, \citet{Polen3}, \citet{Polen4}, \citet{SongSong}, \citet{BaakeHerms},
\citet[Chap.\ 8.2]{Durrett}, or \citet{Polen}. In the deterministic limit, which emerges when $N \to \infty$ without rescaling of the $\r{\A}{}$ or of time, the population is described by a system of ordinary differential equations, again forward in time. This system has an explicit solution, both for the type distribution and for correlation functions of all orders, for an arbitrary number of sites \citep{BaakeBaake,Baake}. 
This also provides a decent approximation for large but finite populations 
\citep{BaakeHerms}, but dealing appropriately with the stochasticity
of finite populations remains a major challenge.

The second line of research is concerned with genealogical aspects and sampling formulae. Here, one starts from a sample taken from the present population and traces back the ancestry of the various segments the individuals are composed of.
A major challenge lies in the calculation of the probabilities for the type distribution of a random sample, that is, one aims at so-called \emph{sampling formulae}, see \citet[Chap.~3.6]{Durrett}. These questions are naturally approached backward in time. Usually, one employs the \emph{diffusion (or weak recombination) limit}, that is, time is sped up by a factor of $N$, followed by $N \to \infty$ such that $ N \r{\A}{} \to \o{\A}{}$, $\o{\A}{}$ a constant, $\A \in \OP_2(S)$. Obtaining sampling formulae is tied to the situation in which the population has reached a stationary state; even  this case is very hard to treat, and coping with time dependence seems to be hopeless. 

The aim of this article is to build a bridge between these two lines of research. We will explore the type distribution and the correlations over time, in the stochastic setting. A starting point will be a recent paper by \citet*{Polen}, who approach this question. Their setting is entirely forward in time, which effectively hides some of the underlying
structure. In contrast, we will proceed backward in time and provide a genealogocial approach for the analysis of correlations. The crucial notion in this context will be that of \emph{duality} between the original Moran model forward in time and a suitable ancestral process that follows back the ancestry of selected segments from today's population. This will also shed new light on the results of \citet{Polen}.
In order to keep the approach as general as possible, we will throughout adhere to the original (finite $N$) model, without taking any limit, but will discuss the various scalings and limits  where appropriate.

The paper is organised as follows. In Section~\ref{sec:prelim}, we start by
collecting some important facts about
partitions and M\"obius functions. We then (Section~\ref{sec:model}) introduce 
the model more formally and motivate our genealogical approach, which may
be considered a marginal version of the usual ancestral process with
recombination. In Section~\ref{sec:partitioning}, we describe our ancestral
process, which is a partitioning process that keeps track of how the
ancestral material is partitioned between individuals. In 
Section~\ref{sec:recombinators}, we introduce a
systematic description via recombinators, which describe the action of
recombination on a population and have
proved very useful in the deterministic setting. We 
complement them here by \emph{sampling functions}, which are additionally
required for finite populations. The collection of sampling functions will be
crucial since it will also serve as \emph{duality function} in
Section~\ref{sec:duality}, where the duality between the
Moran model forward in time and the partitioning process backward in time
is proved. This proof, at the same time, yields a differential equation
system for the expectations of the sampling functions, which are the
building blocks for the linkage disequilibria. 
In Section~\ref{sec:examples}, we apply our results to the cases of
two and three sites. We will see that the expected linkage disequilibria 
(of second and third order) decay exponentially even in the presence
of resampling, and identify further linear combinations of expected
sampling functions that decay exponentially.
For two sites,  we also obtain the explicit time course for the 
expected composition of the population, and, at the same time, the
fixation probabilities of the various types. 
%


\section{Preliminaries: Partitions and M\"obius functions}\label{sec:prelim}
Working with partitions will be essential to our approach, and we will
rely throughout on the powerful concept of \emph{M\"obius functions}
and \emph{M\"obius inversion}. Let us briefly collect the basic
notions and standard results;  more  background material as well as 
the proofs may be found in \citet{Rota}, \citet[Chap.~3.2]{Berge}, \citet[Chap.~I,II,IV]{Aigner} and
\citet[Chap.~3]{Stanley}. 
\paragraph{Partitions.} 
Let $W$ be a finite, nonempty, totally ordered set, such as a
finite subset of $\mathbb{N}$; later, $W$ will be $S$ or a 
subset thereof. Let $\PP = \PP (W)$ be the set of
partitions of $W$. We write such a partition as $\A = \{ A_{1}, \dots , A_{m}
\}$, where $A_{j}\neq \varnothing$ for all $j$ and
$A_{j} \cap A_{k} = \varnothing$ for all $j\ne k$ together with $A_{1} \cup
\dots \cup A_{m} = W$.
We call $A_j$ a \emph{block (or part)} of $\A$ and $m=|\A|$ is the number of blocks in $\A$.

We say that a partition $\A= \{ A_{1}, \dots , A_{m}\}$ of $W$ is
\emph{ordered} (or \emph{contiguous}, or an \emph{interval partition}) 
if  every  $A_j$ is ordered in $W$, that is,
$A_j = \{x \in W \mid \min A_j \leqslant x \leqslant \max A_j \}$. 
For example, if $W=\{1,2,5,7,9\}$, then $\{\{1,2,5\},\{7,9\}\}$ is
ordered, but  $\{\{1,2,7\},\{5,9\}\}$ is not.
The set of all ordered partitions of $W$ is denoted by $\OP(W)$, the set of
all ordered partitions of $W$ \emph{into (exactly) two parts} is $\OP_2(W)$, and the set of
all ordered partitions of $W$ \emph{into at most two parts} 
is $\OP_{\leqslant 2}(W)$.

For a given partition $\A= \{ A_{1}, \dots , A_{m}\}$ of $W$, let $M := \{ 1,2, \dotsc, m\} = M(\A)$ and, for $J \subseteq M$, we define
$\A_J := \{A_j\}_{j \in J}$ and $A_J := \cup_{j \in J} A_j$. $\A_J$ is a
partition of $A_J$. In particular,
$\A_M=\A$, $A_M = W$, $\A_{\{j\}} = \{A_j\}$, 
and $\A_{M \setminus \{j\}} = \A \setminus \{A_j\}$, for any
$j \in M$.
Note that $M$ depends on $\A$, but we suppress this dependence
when there is no risk of confusion. We will throughout abbreviate
$J \setminus j := J \setminus \{j\}$ and $J \cup k := J \cup \{k\}$. 

The natural ordering relation on $\PP(W)$ is denoted by
$\preccurlyeq$, where $\A \preccurlyeq \B$ means that $\A$ is a 
\emph{refinement} of $\B$, that is,  every block of $\A$ is a subset of
a block of $\B$; equivalently,  $\B$ is a \textit{coarsening} of $\A$.
$\A \prec \B$ means that $\A \preccurlyeq \B$ and  $\A \neq \B$. 
Together with the resulting  partial order,  $\PP(W)$  is a \textit{poset} and,
in particular, a \emph{finite lattice}.
$\PP(W)$ has a unique \emph{minimal} or \emph{finest} partition, which is
denoted as $\0 = \{ \{x\} \mid x \in W \}$; likewise, there
is a unique \emph{maximal} or \emph{coarsest} one, namely $\1 = \{ W \}$.

When $U$ and $V$ are \emph{disjoint} (finite) sets, two partitions
$\A\in\ts\PP(U)$ and $\B\in\ts\PP(V)$ can be joined into $\A \cup \B$ to form an
element of $\PP(U\, \dot\cup\, V)$. 
Furthermore, if $U\nts\subseteq W$, a partition $\A\in\ts\PP(W)$, with $\A = \{ A_{1},
\dots , A_{m} \}$ say, defines a unique partition of $U$ by
restriction. The latter is denoted by $\A|^{\pa}_{U}$, and its parts
are precisely all non-empty sets of the form $A_{i} \cap U$ with
$1\leqslant i \leqslant m$. In particular, $\1|^{\pa}_{U}$ is the coarsest element in $\PP(U)$.
For two partitions $\A$ and $\B$, the \textit{least upper bound} will be denoted by $\A \vee \B$, namely the finest partition $\C$ for which $\A \preccurlyeq \C$ and $\B \preccurlyeq \C$. Analogously define the \textit{greatest lower bound} of $\A$ and $\B$ by $\A \wedge \B$. 

\begin{example} Consider the two partitions $\A=\{\{1,3,4\},\{2,5\}\}$ and $\B=\{\{1,4\},\{2,3\},\{5\}\}$ of $W=\{1,\dotsc,5\}$ together with a subset
$U=\{1,2,4\}$ of $W$. Then $\A\wedge \B=\{\{1,4\},\{2\},\{3\},\{5\}\}$, $\A\vee\B=\{\{1,\dotsc,5\}\}$, and $\A|^{\pa}_U=\{\{1,4\},\{2\}\}$. 
\end{example}

\paragraph{Möbius functions on the poset of partitions and Möbius inversion.}
The  \textit{Möbius function of a poset} is a  general and powerful tool
in discrete mathematics. It may be considered as a systematic way of 
implementing the inclusion-exclusion principle. We  rely on it
in two contexts in this article: First, we  use it to turn sampling
without replacement into sampling with replacement, and vice versa.
Second, we  need it to turn type frequencies into linkage disequilibra.

Refering to \citet[Prop.~4.6]{Aigner}, let us only summarise here that the M\"obius function $\mu$ is 
defined for all $\A\preccurlyeq \C \in \PP(W)$ via
\begin{equation}\label{help_moeb_Z}
	\sum_{\A \preccurlyeq \underdot{\B} \preccurlyeq \C} \mu (\A, \B) = \begin{cases} 1,\ \A = \C, \\ 0, \text{ otherwise}, \end{cases}
\end{equation}
where the underdot indicates the summation variable.
Let $\A \< \B \in \PP(W)$, with $m=|\B|$ the number of blocks in $\B$, and
$n_j$ the number of blocks of $\A$ within  block $B_j$ of $\B$,
that is, $n_j$ is the number of blocks in $\A|^{\pa}_{B_j}$,
$1 \leqslant j \leqslant m$. The M\"obius function of the pair $(\A,\B)$
is then given by 
\begin{equation}
\mu(\A,\B)
=\prod_{j=1}^m \, \mu \big ( \A |^{\pa}_{B_j}, \1 |^{\pa}_{B_j} \big )
=\prod_{j=1}^m \, (-1)_{}^{n_j-1} (n_j-1)! \,,	\label{moebius_function}
\end{equation}
see \citet[Sect.~7, Ex.~1]{Rota} or \citet[Chap.~3.2, Ex.~4]{Berge}.
We can now state the fundamental M\"obius inversion principle as in \citet[Prop.~4.18]{Aigner}. Let $f$ and $g$
be  mappings from $\PP(W)$ to $\CC$ which are, for all $\A \in \PP(W)$,
related via
\begin{equation}\label{Moebius_sum}
  g(\A) = \sum_{\underdot{\B} \> \A} f(\B).
\end{equation}
Then, this can be solved for $f$ via the inversion formula
\begin{equation}\label{Moebius_inv}
  f(\A) = \sum_{\underdot{\B} \> \A}  \mu(\A,\B) \, g(\B).
\end{equation}
More precisely, this is \emph{inversion from above}.  An analogous
formula applies for \emph{inversion from below}; this relies on refinements
rather than coarsenings, with `$\>$' replaced by `$\<$' in~\eqref{Moebius_sum} and~\eqref{Moebius_inv}.
It is important to note that M\"obius inversion is not restricted to
functions; it also applies to bounded operators.


\section{The model and the genealogical approach}\label{sec:model}
In this section, we define the model formally and motivate our genealogical approach. 
\subsection{The Moran model with single-crossover recombination}
We identify the population at time $t$ by a  (random)  counting measure
$Z_t$ on $\XX$. Namely, $Z_t(\{x\})$ denotes the number of individuals
 of type $x \in \XX$ at time $t$, and
$Z_t(\mathbb{A}) := \sum_{x \in \mathbb{A}} Z_t(\{x\})$ for $\mathbb{A} \subseteq \XX$;
we abbreviate $Z_t(\{x\})$ as $Z_t(x)$. If we define $\delta_x$ as the point measure on $x$ 
(i.e., $\delta_x(y)= \delta_{x,y}$ for $x,y \in \XX$), we can also write
$Z_t = \sum_{x \in X} Z_t(x) \, \delta_x$. Since our Moran population has
constant size $N$, we have $\|Z_t\| = N$ for all times, where 
$\| Z_t \| := \sum_{x \in \XX} Z_t(x)=Z_t(\XX)$
is the norm (or total variation) of $Z_t$.

So, $\{Z_t\}_{t \geqslant 0}$ is  a Markov process in continuous time 
with values in 
\begin{equation}\label{E}
E := \big\{z \in \{0, \dotsc, N\}^{|\XX|} \mid \| z \| = N \big\},
\end{equation}
where $\lvert \XX \rvert$ is the number of elements in $\XX$.
We will describe the action of recombination on (positive) measures with the help of so-called \emph{recombinators} as introduced by \cite{BaakeBaake}; see also \cite{BaakeHerms} for a pedestrian introduction. Let $\mathcal M_+(\XX)$ be the set of all positive, finite measures on $\XX$ and we understand $\mathcal M_+(\XX)$ to include the zero measure. Define the canonical projection  $\pi^{}_I \colon \XX \to \varprod_{i\in I} \XX_i =: \XX_I$, for $I\subseteq S = \{1, \ldots, n\}$, by $\pi^{}_I(x) = (x_i)_{i\in I}$ as usual. 
For $\omega \in \mathcal M_+(\XX)$, the shorthand  $\pi_I^{}.\omega:=\omega \circ \pi_I^{-1}$ indicates the marginal measure with respect to the sites in $I \subseteq S$, where $\pi_I^{-1}$ is the preimage of $\pi_I^{}$. The operation $.$ (where the dot is on the line and should not be
confused with a multiplication sign) is known as the  
\emph{pushforward}
of $\omega$  w.r.t.\ $\pi^{}_I$. In terms of coordinates, 
the definition may be spelled out as
\[ 
  \big(\pi^{}_I . \omega \big) \big(x^{}_I \big) = \omega \circ \pi_I^{-1} \big(x^{}_I \big) 
  = \omega \big(\{x\in \XX \mid  \pi^{}_I(x) = x^{}_I \} \big), \quad 
  x^{}_I \in \XX_I.
\]
Note that $\pi^{}_\varnothing . \omega = \|\omega \| $ and 
$\pi^{}_S . \omega = \omega $. 

Consider now $\A = \{\{1,2, \dotsc, i\},\{i+1, \dotsc, n\}\}
\in \OP_2(S)$ and 
$\omega \in \mathcal M_+(\XX)\setminus 0$, and define
the \emph{projective recombinator} as 
\begin{equation}\label{recombinator_2}
\R{\A}{p}(\omega) := \frac{1}{\|\omega \|^2}\,
\big(\pi^{}_{\{1, \dotsc, i\}} . \omega \big)\otimes 
\big(\pi^{}_{\{i+1, \dotsc, n\}} . \omega\big),
\end{equation}
where $\otimes$ indicates the tensor product (or product measure). Moreover, we set $\R{\tts \1}{p}(\omega): = \omega/\|\omega\|$. $\R{\A}{p}(\omega)$
is a probability measure for \emph{all} $\omega \in \mathcal M_+(\XX)\setminus 0$,
where the zero measure is excluded to make it well-defined.
In words, $\R{\A}{p}$ turns $\omega$ into the (normalised) product measure
of its marginals with respect to the blocks in $\A$. Writing out~\eqref{recombinator_2} in terms of coordinates
gives
\[
\begin{split}
\big( \R{\A}{p} (\omega) \big) (x) & = 
 \frac{1}{\|\omega \|^2}\, \big( \pi^{}_{\{1, \dotsc, i\}} . \omega \big) \big(x^{}_{\{1, \dotsc, i\}} \big)  \, 
\big ( \pi^{}_{\{i+1, \dotsc, n\}} . \omega \big ) \big(x^{}_{\{i+1, \dotsc, n\}}\big)  \\
& = \frac{1}{\|\omega \|^2}\;
\omega (x^{}_1, \dotsc, x^{}_i, \ast, \dotsc, \ast ) \,
\omega (\ast, \dotsc, \ast, x^{}_{i+1}, \dotsc, x^{}_n),
\end{split}
\]
where $\ast$ means marginalisation. 
If $\omega=z$ is the current population, then $\R{\A}{p}(z)$ is the
type distribution  that results when a new individual is created by drawing a leading and (possibly) a trailing segment 
(as encoded by $\A \in \OP_{\leqslant 2}(S)$)
from the current population, uniformly and with replacement. 

\begin{remark}
$\R{\A}{p}$ is a projective version of the recombinator defined by
\cite{BaakeBaake}; it differs from the latter by a factor of $1/\|\omega\|$.
Clearly, both versions agree on the set of probability measures.
As we shall see, the projective version is more suitable in the
stochastic setting, while the original recombinators  are
better adapted to the deterministic situation. 
Since recombinators will only appear in the projective version in this
article, we will drop the superscript and the specification `projective'
and call $\R{\A}{}:= \R{\A}{p}$ a \emph{recombinator} by slight abuse of language.
\end{remark}

In Section~\ref{sec:recombinators}, we will generalise the recombinators and learn more about their probabilistic
meaning and mathematical properties. For the moment, 
let us use them to reformulate the Moran model with recombination in a compact way. Namely, since all individuals die at rate 1, the population loses type-$y$ individuals at rate $Z_t(y)$. Each loss is replaced by a new individual, which is sampled uniformly from $\R{\A}{} (Z_t)$ with
probability $\r{\A}{}$, $\A \in \OP_{\leqslant 2}(S)$. 
Therefore, when $Z_t = z$, the transition to $z + \delta_x - \delta_y$
occurs with rate
\begin{equation}\label{lambda} 
\lambda(z;\, y,\, x) := \sum_{\A \in \OP_{\leqslant 2}(S)} \r{\A}{} \big( \R{\A}{} (z) \big )(x) \,  z(y).
\end{equation}
The summand for $\A=\1$ corresponds to pure resampling, whereas all
other summands include recombination.
Note that $\lambda$   includes `silent transitions' ($x=y$).

\begin{remark}\label{rem:altmodel}
We would like to mention that the model may  alternatively be formulated
in terms of reproducing individuals rather than dying individuals, 
as follows.
Each individual reproduces at rate 1 and picks an $\A \in \OP_{\leqslant 2}(S)$ 
according to the recombination distribution. If $\A \in \OP_2$, the reproducing 
individual contributes the sites in one of the blocks in $\A$ and picks 
a random partner that contributes the sites in the other block to the
offspring.
If $\A = \1$, the reproducing individual contributes all sites.
The offspring pieced together in this way replaces a uniformly
chosen individual from the population. In this formulation, which is closer 
in spirit to the \emph{deterministic} single-crossover model, 
offspring of type $x$ are created
at rate $N \r{\A}{} ( \R{\A}{} (Z_t) )(x)$  and replace an individual of type
$y$ with probability $Z_t(y)/N$. This explains the different 
normalisation of the original recombinator, whereas the additional factor of 
$N=\|Z_t\|$ is absorbed
in its definition in \citet{BaakeBaake}. The resulting 
transition rates, however, are again those in \eqref{lambda}.
Here, we stay with the formulation that led to \eqref{lambda} in the 
first place, since it seems more natural for finite populations. 
\end{remark}

Let us summarise our model as follows:

\begin{definition}[Moran model with single crossovers]\label{def_Moran}
The Moran model with single crossovers is the Markov chain in continuous time
$\{Z_t\}_{t \geqslant 0}$ with state space $E$ of \eqref{E} 
and generator matrix
$\varLambda$ with nondiagonal elements  
\begin{equation*}
\varLambda (z,\, z + w) = \sum_{\substack{x, y \in \XX: \\ \delta_x - \delta_y = w} } \lambda(z;\, y,\, x),\quad w \neq 0,
\end{equation*}
for $z \in E,\ w \in E - z$ (where $E - z := \{v \mid z + v \in E\}$) and $\varLambda (z,\, z) = - \sum\limits_{\substack{v \in E-z:\\ v \neq 0}} \ \varLambda(z, z + v)$.
\end{definition}

\paragraph{Limits of the forward model.}
Consider now the family of processes $\{Z_t^{(N)}\}_{t\geqslant 0}^{}$,
$N=1,2, \dotsc$, where we add the upper index to indicate
dependence on population size. Also consider the normalised version 
$\{Z_t^{(N)}/N\}_{t\geqslant 0}^{}$; $Z_t^{(N)}/N$ is a random probability
measure on $\XX$. For $N \to \infty$ and without any rescaling of the
$\ra$ or of time, the sequence $\{Z_t^{(N)}\}_{t\geqslant 0}^{}$ converges
to the solution of the \emph{deterministic single-crossover equation}
\begin{equation}\label{detreco}
\dot\omega_t=\sum_{\A \in \OP_2(S)} \r{\A}{} \big (\R{\A}{} (\omega_t) - \omega_t \big)
\end{equation}
with initial value $\omega^{}_0$, $\omega^{}_0$ a probability measure, and we assume that 
$\lim_{N \to\infty}  Z_0^{(N)}/N=\omega^{}_0$. This is a 
\emph{dynamical law of large numbers} and due to
\citet[Thm.~11.2.1]{EthierKurtz}. 
The precise statement  as well as the proof are perfectly analogous to Proposition 1 in \citet{BaakeHerms}, 
which assumes a slightly different sampling scheme for recombination. We
therefore leave out the details here.
The deterministic single-crossover equation~\eqref{detreco} was
investigated by \citet{BaakeBaake} and \citet{Baake}. For comparison, note that,
in view of Remark~\ref{rem:altmodel}, 
the probability $\r{\A}{}$ in~\eqref{detreco} is multiplied by the unit rate
at which each individual reproduces, and this way turns into a
recombination \emph{rate}.  

The Moran model with recombination
also has a well-known diffusion limit, which emerges when $N \to
\infty$ under $N \r{\A}{} \to \o{\A}{}$, $\o{\A}{}$ a constant, $\A \in \OP_2(S)$, after 
a speedup of time by a factor of $N$.
In the case of two
loci and two alleles, this goes back to \citet{OhtaKimura}; see also
\citet[Chap.~8.2]{Durrett} for a modern exposition. Two loci with
an arbitrary (but finite) number of alleles are treated by 
\citet{JenkinsFearnheadSong}.
This should readily generalise to the case of a finite number of loci with a
finite number of alleles, but we do not spell it out here, since we will
not draw on the diffusion limit of the forward process later.
\subsection{The ancestral recombination process (ARP) and its marginal version}
In line with standard population-genetic thinking, we employ a genealogical approach by tracing back the ancestry of  (parts of) the genetic material from a population at present that evolved according to the Moran model with single-crossover recombination.  The standard genealogical approach for models with recombination is the  ancestral recombination graph (ARG) first formulated by \citet{Hudson}. 
Today, many different notions of `ARG' are in use. We stick to the usual
convention here that the ARG  assumes the diffusion limit. Hudson's original
version was for two loci, but multilocus generalisations 
\citep{GriffithsMarjoram,BhaskarSong}  
and continuous sequence versions ($n \to \infty$, see, e.g., 
\citealt[Chap.~3.4]{Durrett}) are immediate. 

 \begin{SCfigure}[30][bhtp]
\centering
\fboxsep3mm
{\fbox{{\includegraphics[width=.4\textwidth]{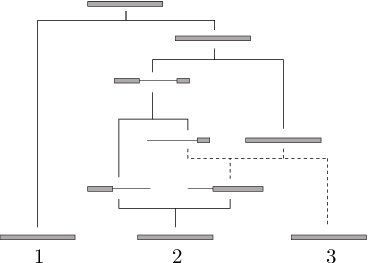}}}}\hfill 
\caption{\label{fig:ARP} A realisation of the full ancestral recombination process, starting from $m=3$ individuals; ancestral material is shaded, non-ancestral material is indicated by thin horizontal lines. The mixed recombination-coalescence event indicated by dashed lines can only appear in the finite population recombination process (ARP). In the diffusion limit, and thus in the ARG, recombination and coalescence act in isolation.} 
\end{SCfigure}

The ARG starts from a sample of individuals from the present population and follows their ancestry backward. When a sequence (or a part of a sequence) experiences a recombination event, it branches into a leading and a trailing segment; when two (parts of) sequences go back to a common ancestor, there is a coalescence event. For overviews see \citet[Chap.~5]{HeinSchierupWiuf}, \citet[Chap.~3.4]{Durrett}, or \citet[Chap.~7.2]{Wakeley}. Mutation can be independently superimposed on the ARG, but will not be considered in this article. One is then interested in the full information on the sample, namely, the probabilities for all possible type distributions of the sample. 
The stationary state of the ARG may be characterised by  a collection of so-called \emph{sampling recursions}; they may be solved analytically for tiny samples (leading to explicit \emph{sampling formulae}), or numerically for larger ones, see \citet{Golding}, or \citet[Chap.~3.6]{Durrett}. But feasibility is limited due to the enormous state space, even for small samples. Alternatively, one resorts to computationally intensive Monte-Carlo or importance-sampling methods to simulate the ARG \citep{GriffithsMarjoram,WangRannala,JenkinsGriffiths}. Recently, Song and coworkers discovered structural properties of the ARG that allow for an efficient combination of analytical and simulation techniques in the regime of \emph{strong recombination} \citep{Song}; more precisely,
they work in terms of expansions in $1/\varrho$ as $\varrho \to \infty$,
where all $\o{\A}{}$, with $\A \in \OP_2(S)$, are assumed to 
scale linearly with the common factor $\varrho$. 

In contrast, we will work in the setting of both \emph{finite} $n$ and \emph{finite} $N$. The corresponding
\emph{ancestral recombination process (ARP)}, which is illustrated in
Figure~\ref{fig:ARP}, is a finite-population version of the multilocus ARG. We then simplify matters by 
only aiming at reduced information. Namely, we consider a partition $\A= \{ A_1,A_2,\dotsc,A_m\}$ of $S$  (with
$m \leqslant N$). Now sample $m$ individuals from the present population and follow back the ancestry of the sites in $A_1$  in the first individual, in $A_2$ in the second individual, $\dotsc$, in $A_m$  in the $m$'th individual, without considering any other sites and any other individuals, as in Figure~\ref{fig:motivation}. That is, each
locus is considered in  one individual only.  The result may be viewed as a \emph{marginalised} version of the ancestral recombination process, and, in the diffusion limit, turns into a marginal version of the multilocus
ARG starting from a sample of size $m$. We will see that this information is sufficient to characterise the time evolution of the expected linkage disequilibria of all orders. We will not employ any scaling or limit, in order to allow for arbitrary strengths of recombination. It will turn out that the approach of \citet{Polen} actually corresponds to this marginal ancestral recombination process, although this is not apparent from their formulation forward in time.

\begin{figure}[t]
\centering
\fboxsep3mm
\subfloat{\fbox{{\includegraphics[width=.45\textwidth]{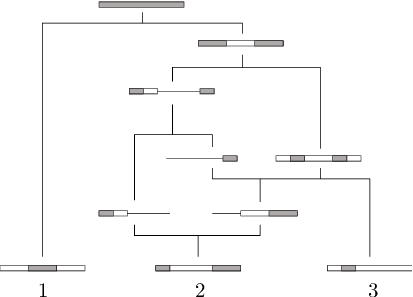}}}}\hfill
\subfloat{\fbox{{\includegraphics[width=.45\textwidth]{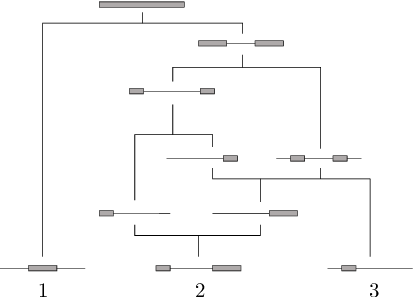}}}}
\caption{\label{fig:motivation} The  marginalised version corresponding to the ARP in Figure~\ref{fig:ARP}, in which we only follow blocks of the partition (shaded), that is, block $A_i$ is sampled in individual number $i$, 
$1\leqslant i\leqslant m$. Material that is ancestral to the sampled individuals, but not to the blocks considered, is shown as open rectangles (left). But since this is not traced back, it can be treated in the same way as material non-ancestral to the sampled individuals (right). Consequently,  the sample will
finally  consist of the blocks of the partition only.}
\end{figure}

More precisely, the letters at the loci considered at present, together with their ancestry, can be constructed by a three-step procedure (see Figure~\ref{backward_process}). First we run a partitioning process $\{\varSigma_t\}_{t\geqslant 0}$ on $\PP(S)$,  backward in time, starting at a given initial partition $\varSigma_0$ with $\lvert \varSigma_0 \rvert = m$. $\varSigma_t$ describes the partitioning of sites into parental individuals at time $t$; sites in the same block (in different blocks) belong to the same (to different)
individuals. Clearly, $\lvert \varSigma_t \rvert$ is
the number of ancestral individuals at time $t$. The process $\{\varSigma_t\}_{t \geqslant 0}$ is independent of the types and will be described in detail in the next section.  In the second step, a letter is assigned to each site of $S$ at time $t$ (i.e.\ in the past) in the following way. For every part of
$\varSigma_t$,  pick an individual from the initial population (without replacement) and copy its letters to the sites in the block considered. For illustration, also assign  a colour to each block, thus indicating different parental individuals. In the last step, the letters and colours are propagated downward (i.e. forward in time) according to the realisation of  $\{\varSigma_t\}_{t\geqslant 0}$ laid down in the first step.
A similar construction was used in the ancestral process by \citet{BaakeWangenheim}, but restricted to a sample of size 1 (i.e. start with $\varSigma_0 = \1$), and in discrete time in the deterministic limit.
Let us now describe the partitioning process in detail.

\begin{figure}[htbp]
\captionsetup[subfigure]{labelformat=empty}
\begin{center}
\subfloat{{\includegraphics[width=.95\textwidth]{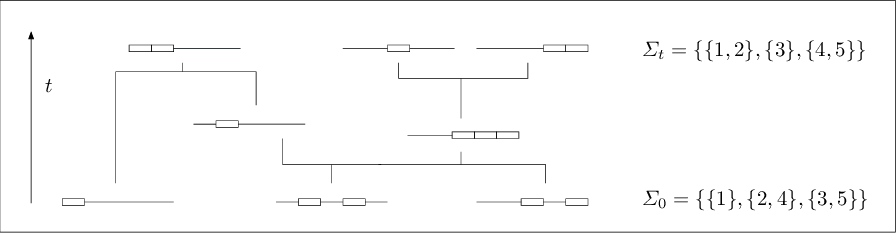}}}\\
\subfloat{{\includegraphics[width=.95\textwidth]{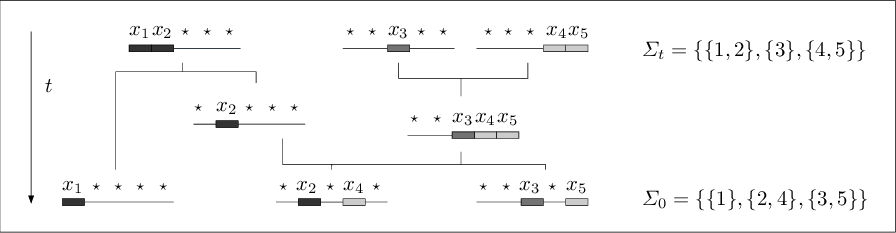}}}
\end{center}
\caption{\label{backward_process} Construction of one possible ancestry of a collection of sites that correspond to the initial partition $\varSigma_0= \{\{1\},\{2,4\},\{3,5\}\}$. The upper panel shows the partitioning process (backward in time). In the lower panel, letters and colours are assigned to each block of $\varSigma_t$  and propagated downward (forward in time). }
\end{figure}


\section{The partitioning process}\label{sec:partitioning}
The partitioning process $\{\varSigma_t\}_{t\geqslant 0}$ is a Markov process on $\PP(S)$, which describes how the sites are partitioned into different individuals backward in time.
Since there is a one-to-one relationship between the individuals and the
blocks of the partition, we may identify individuals with the
ancestral material they carry. 
 
The process $\{\varSigma_t\}_{t\geqslant 0}$ consists of a mixture of splitting (S) 
and coalescence (C) events. It can be constructed independently of the types. In this section, we describe the process by arguing on the grounds of the underlying Moran model; in Section~\ref{sec:duality}, we will formally prove that this is indeed the correct dual process for it.

Since we trace back sites in subsets 
$U \subseteq S$ (rather than complete sequences), we need the corresponding \emph{marginal recombination probabilities}
\begin{equation}\label{def-marginal-rates}
  \r{\, \B}{U} \, := \!
  \sum_{\substack{\A \in \OP_{\leqslant 2}(S) \\ \A |^{\pa}_{U} = \B}} 
  \! \r{\A}{S}
\end{equation}
for any $\B\in\OP_{\leqslant 2}(U)$, where $\r{\A}{S}=\r{\A}{}$.
Note that, for $|U|=1$, the only recombination parameter is $\r{\,\1}{U}=1$. If $U$ is ordered in $S$ (i.e.\ $U = \{x \in S: \min(U) \leqslant x \leqslant \max(U)\}$) and $\B \neq \1 |^{\pa}_U $,
then $\r{\,\B}{U}$ is simply the  probability of crossover after the
(unique) site that leads to partition $\B$. If $U$ is not ordered in $S$,
then $\r{\,\B}{U}$ is the sum of the probabilities of all crossovers that 
lead to partition $\B$, as illustrated in 
Figure~\ref{trapped}. 

Assume now that $U$ is an unordered block of $\varSigma_t$.
This means there is so-called \emph{trapped material}, that is, non-ancestral sites 
enclosed between ancestral regions. All crossover events within
a given trapped segment contribute to the separation of the adjacent
ancestral segments -- in contrast to crossovers in
flanking non-ancestral regions  to the left or the right  of $U$, which
do not affect the genealogy.
Note finally that the upper index in $\r{\,\B}{U}$
can, in principle, be omitted since $U=\cup_{i=1}^{|\B|} \, B_i$, and we will
do so when appropriate. \\

\begin{figure}[htbp]
\centering
\includegraphics[width=.9\textwidth]{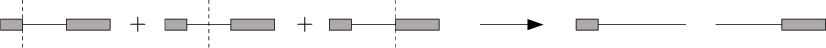}
\caption{\label{trapped}
Let $S=\{1,\dotsc,5\}$ and $U=\{1,4,5\} \subset S$. For the
partition $\B=\{\{1\},\{4,5\}\}$,  there are three  recombination events 
that partition  $U$ into $\B$, thus $\r{\,\B}{U}=r_{\{\{1\},\{2,3,4,5\}\}} + r_{\{\{1,2\},\{3,4,5\}\}} + r_{\{\{1,2,3\},\{4,5\}\}}$.
}
\end{figure}

Now start with the initial partition $\varSigma_0$.
Suppose that the current state is $\varSigma_t = \A=\{A_1,\dotsc,A_m\}$ and denote by $\Delta$ the waiting time to the next event. $\Delta$ is exponentially distributed with parameter $m$, since each block corresponds to an individual, and each individual is independently affected at rate $1$. When the event happens, choose a block uniformly.
If $A_j$ is picked, then $\varSigma_{t+\Delta}$ is obtained as follows (see Figure~\ref{partitioning_process} for an example).\\
In the splitting step, block $A_j$ turns into an intermediate state $\J$
with probability $\r{\J}{\!A_j}$, $\J \in \OP_{\leqslant 2}(A_j) $. In detail: 
\begin{enumerate}[label=(S$_\arabic*$),leftmargin=40pt]
\item \label{split_into_one} With probability $\r{\,\1}{\!A_j}$, the block
$A_j$ remains unchanged. The resulting intermediate state (of this block) is $\J=\1|_{A_j}$. 
\item \label{split_into_two} With probability $\r{\J}{\!A_j}$, $\J\in \OP_2(A_j)$,
 block $A_j$ splits into two parts, $\J=\{A_{j_1},A_{j_2}\}$, which are ordered in $A_j$, but not necessarily in $S$.  
Recall that, via~\eqref{def-marginal-rates}, $\r{\J}{\!A_j}$
takes into account \emph{all} recombination probabilities that lead to
$\J$, including those within trapped material. 
\end{enumerate}
Now, each block of $\J$ chooses out of $N$ parents, uniformly and with replacement. Among these, there are $m-1$ parents that carry one block of $\A_{M \setminus j}=\A \setminus A_j$ each; the remaining $N-(m-1)$ parents are \emph{empty}, that is, they do not carry  ancestral material available for coalescence. Coalescence happens if the choosing block  picks a parent that carries ancestral material; otherwise, the choosing block  becomes an ancestral block of its own, which is available for coalescence from then onwards. The possible outcomes are certain coarsenings of $\A_{M \setminus j}\cup \J$, namely:\\

\noindent If $\J = \{A_j\}$ (case~\ref{split_into_one}), then either 
\begin{enumerate}[label=(C$_{1,\arabic*}$),leftmargin=40pt]
\item \label{coal_none_of_one} With probability $(N - (m - 1))/N$, block $A_j$ does not coalesce with any block of $\A_{M \setminus j}$. As a result, $\varSigma_{t+\Delta}=\varSigma_t=\A$. 
\item \label{coal_one_of_one} With probability $1/N$,  block $A_j$ coalesces with   
block $A_k$,  $k \in M \setminus j$. This results in 
$\varSigma_{t + \Delta}=\A^{}_{M \setminus \{j,k\}} \cup A_{\{j,k\}}$.
\end{enumerate}

\noindent If $\J=\{A_{j_1},A_{j_2}\}$ (case~\ref{split_into_two}), 
we get the following possibilities: 
\begin{enumerate}[label=(C$_{2,\arabic*}$),leftmargin=40pt]
\item \label{coal_none_of_two} With probability $(N-(m-1))(N-m)/N^2$, no block of $\J$ coalesces with a block of $\A_{M \setminus j}$, so 
$\varSigma_{t+\Delta}=\A_{M \setminus j}\cup \J$. 
\item \label{coal_one_of_two} With probability $(N-(m-1))/N^2$, one block of $\J$ coalesces with  block $A_k$, $k \in M \setminus j$, while the other block of $\J$ chooses an empty individual. This ends up in the state $\varSigma_{t+\Delta}= 
\A_{M \setminus \{j,k\}}\cup \{A_{\{j_1,k\}},A_{j_2}\}$ or $\varSigma_{t+\Delta}= \A_{M \setminus \{j,k\}}\cup \{A_{\{j_2,k\}},A_{j_1}\}$. 
That is, in going from $\varSigma_{t}$ to $\varSigma_{t+\Delta}$, either block $A_{j_1}$ or $A_{j_2}$ is moved from $A_j$ to $A_k$. 
\item \label{coal_two_with_each_other} With probability $(N-(m-1))/N^2$, the blocks $A_{j_1}$ and $A_{j_2}$ coalesce with each other, but choose an empty individual, which gives  $\varSigma_{t+\Delta}= \A$.
\item \label{coal_two_of_two} With probability $1/N^2$, the block $A_{j_1}$ coalesces with  $A_k$ and $A_{j_2}$ coalesces  with $A_{\ell}$, $k,\ell \in M \setminus j$. This yields either $\varSigma_{t+\Delta}=
\A_{M \setminus \{j,k,\ell\}} \cup \{A_{\{j_1,k\}}, A_{\{j_2,\ell\}} \}$ if $k \neq \ell$, or $\varSigma_{t+\Delta}= \A^{}_{M \setminus \{j,k\}} \cup A_{\{j,k\}}$ if $k = \ell$.
\end{enumerate}

\begin{figure}[htbp]
\centering \includegraphics[width=.8\textwidth]{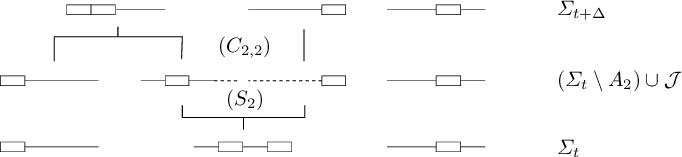}
\caption{One step of the partitioning process with current state 
$\varSigma_t=\{A_1, A_2, A_3 \} = \{\{1\},\{2,4\},\{3\}\}$. In this example, $A_2$ is chosen and splits into $\J=\{\{2\},\{4\}\}$. In the following step~\ref{coal_one_of_two}, the leading part coalesces with $A_1$, whereas the trailing part remains separate, so  that we end up in $\varSigma_{t+\Delta}=\{\{1,2\},\{3\},\{4\}\}$.\label{partitioning_process}}
\end{figure}
Summarising, we see that a transition from $\A$ to $\B$, via partitioning
of block $A_j$ into $\J$, $j \in M$, $\J \in \OP_{\leqslant 2}(A_j)$, 
is possible whenever $\B \succcurlyeq \A_{M \setminus j}\cup \J$ and $\B|^{\pa}_{A_{M \setminus j}}=\A_{M \setminus j},$
or, equivalently, whenever
\[
\B|^{\pa}_{A_j} \> \J \quad \text{and} \quad \B|^{\pa}_{A_{M \setminus j}}=\A_{M \setminus j}.
\]
Each block of
$\J$ coalesces into every block currently available  with probability $1/N$, and remains separate with probability $(N-k)/N$ if there are currently $k$ blocks available; in the latter case, the block considered becomes number $k+1$. We can therefore summarise the rate of the said transition as  
\begin{equation} \label{eq:theta}
\vartheta_{j, \J; \A,\B} = 
\begin{cases}
\r{\J}{\!A_j} \frac{1}{N^{|\J|}}
\frac{ ( N-(m-1)  ) !}{(N - |\B|)!}, & \text{if }
\B|^{\pa}_{A_j} \> \J, \, \B|^{\pa}_{A_{M \setminus j}}=\A_{M \setminus j}, \\[2mm]
0, & \text{otherwise.}
\end{cases}
\end{equation}
Note that this includes silent events where $\B=\A$. 
Thus, the partitioning process $\{\varSigma_t\}_{t\geqslant 0}$ is a continuous-time Markov chain on $\PP(S)$ characterised by the generator 
$\varTheta:=( \varThet{\A \B}{})_{\mathcal A,\mathcal B\in \PP(S)}^{}$
with nondiagonal elements
\begin{equation}\label{Theta}
\begin{split}
 \varThet{\A \B}{} & = \sum_{j\in M} \, \sum_{\J \in \OP_{\leqslant 2}(A_j)} \vartheta_{j, \J; \A,\B}
\\
 & = 
\begin{cases}
	\r{\J}{\!A_j} \frac{1}{N^2} \frac{ ( N-(m-1)  ) !}{(N - |\B|)!}, 
&	\text{if } \B|^{\pa}_{A_j}=\J, \B|^{\pa}_{A_{M \setminus j}}=\A_{M \setminus j},  \text{ for some } j \in M, \, \J \in \OP_2(A_j),   \\[2mm]
	\frac{2}{N^2} + \frac{N-1}{N^2} \big( \r{\,\1}{\! A_j} + \r{\,\1}{\!A_k} \big), 
&	\text{if } \B = \A_{M \setminus \{j,k\}} \cup A_{\{j,k\}} \text{ for some } j\neq k \in M, \\[2mm]
0,
& \text{for all other } \B \neq \A,
\end{cases}
\end{split}
\end{equation}
and  $\varThet{\A \A}{} = - \sum_{\B \in \PP(S) \setminus \A} \varThet{\A \B}{}$. Note that, for $\J \in \OP_2(A_j)$
we have distinguished between $\B |^{\pa}_{A_j} = \J$ and
$\B |^{\pa}_{A_j} = \1 |^{\pa}_{A_j} \succ \J$.  The latter corresponds
to $k=\ell$ in~\ref{coal_two_of_two} and leads to the same transition as  a 
\emph{pure coalescence event} in~\ref{coal_one_of_one}. More precisely,
the total coalescence rate of $j$ and $k$ is
\[
\frac{1}{N} \big(\r{\,\1}{\!A_j} + \r{\,\1}{\!A_k} \big) 
+ \frac{1}{N^2}\, \bigg ( \sum_{\J \in \OP_2(A_j)} \r{\J}{\!A_j} 
+  \sum_{\K \in \OP_2(A_k)}  \r{\tts\K}{A_k} \bigg )
= \frac{2}{N^2} + \frac{N-1}{N^2}\, \big( \r{\,\1}{\!A_j} + \r{\,\1}{\!A_k} \big)   
\]
as stated, since $\sum_{\J \in \OP_2(U)} \r{\J}{U} = 1 - \r{\,\1}{U}$,
$U \subseteq S$. 
Note also that transitions to partitions $\B$ with $|\B|>N$ are impossible,
as it must be.

\begin{remark}
In fact, this generator $\varTheta$ coincides with the generator $\varTheta$ 
worked out by \citet{Polen2} and \citet{Polen} with a very different approach, forward in time.
For $n \leqslant 3$, they state the generator matrices explicitly, and the
identity with~\eqref{Theta} is easily checked by elementwise comparison. 
For $n > 3$, they provide an algorithm,
which runs through all  individuals  and all  sites   
and builds up the matrix $\varTheta$ incrementally, 
in the following manner.
For every given individual, leading and trailing segments (for the split
after site $i$, for all $i \in S \setminus n$)
are taken into account, irrespective of 
whether the segments contain ancestral material. This way,
the algorithm does not distinguish between transitions induced by
recombination events within ancestral (or trapped) material and recombination
events that are invisible in the genealogical perspective, that is,
events that are effectively pure coalescence events. 
Instead,
a  case distinction is performed that is based on whether or not one or both
segments coalesce with individuals that do or do not carry ancestral material.
A detailed investigation of this approach, which involves expanding the cases
into  11 subcases and rearranging these according to the
emerging partitions of the complete ancestral material, leads precisely to
our cases~\ref{coal_none_of_two} to~\ref{coal_two_of_two} (here, both emerging
segments contain ancestral material) and~\ref{coal_none_of_one} and~\ref{coal_one_of_one}
(here one segment is  empty).
Since this approach somehow disguises or mixes the various partitions
of ancestral material that may arise due to a transition, it does
not lead to a closed expression for  $\varTheta$.
In contrast, our approach yields the matrix elements explicitly
for arbitrary $n$, and gives them a natural and plausible meaning in
terms of the partitioning process in backward time. 
\end{remark}

\paragraph{Limits of the partitioning process.}
We now examine how the partitioning process behaves in the two limiting
cases  mentioned in Section~\ref{sec:model}, namely, the deterministic
limit and the diffusion limit. Recall that, in the deterministic limit,
we let $N\to\infty$  without rescaling the recombination probabilities or 
time. Consider, therefore, the family of processes $\{\varSigma_t^{(N)}\}_{t\geqslant 0}^{}$, $N=1,2, \dotsc,$ generated by $\varThet{}{(N)}$, where we again
make the dependence on population size explicit through the upper index.
In the limit, only the pure splitting events~\ref{coal_none_of_two} survive, more
precisely:

\begin{proposition}[Deterministic limit]\label{detlimit}
In the deterministic limit, the sequence of partitioning processes 
$\{\varSigma^{(N)}_t\}_{t\geqslant 0}^{}$ with initial states $\varSigma^{(N)}_0
\equiv \sigma$ converges in distribution to the
process $\{\varSigma_t'\}_{t\geqslant 0}^{}$ with initial state 
$\varSigma'_0 = \sigma$
and generator $\varThetp{}$ defined
by its nondiagonal elements
\[
 \varThetp{\A\B} = \begin{cases} \r{\J}{\!A_j}, & \text{if }  
                  \B = \A^{}_{M  \setminus j} \cup \J \text{ for some }
                   j \in M  \text{ and } \J \in \OP_2(A_j),
                   \\[2mm]
      0, &  \text{for all other } \B \neq \A.
\end{cases} 
\]
Hence, $\{\varSigma_t'\}_{t\geqslant 0}^{}$ is a process
of  progressive refinements, that is,
$\varSigma_\tau'\< \varSigma_t'$ for all $\tau> t$. In particular, if 
$\varSigma_0' \in \OP(S)$,  then $\varSigma_t' \in \OP(S)$ for all times.   
\end{proposition}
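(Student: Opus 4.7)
The plan is to (a) verify that the generator $\varTheta^{(N)}$ from~\eqref{Theta} converges entrywise to $\varThetp{}$ as $N \to \infty$, (b) conclude convergence in distribution of the Markov chains by a standard argument on a finite state space, and (c) read off the refinement and $\OP(S)$-preservation properties directly from $\varThetp{}$.

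\textbf{Entrywise limit of the generator.} Fix $\A \neq \B$ in $\PP(S)$ and write $m = |\A|$. The pure-coalescence contribution to $\varThet{\A \B}{(N)}$ (the case $\B = \A_{M \setminus \{j,k\}} \cup A_{\{j,k\}}$) equals $\tfrac{2}{N^2} + \tfrac{N-1}{N^2}\bigl(\r{\,\1}{\!A_j} + \r{\,\1}{\!A_k}\bigr) = O(1/N)$ and thus vanishes. For the splitting contribution ($\B|^{\pa}_{A_j} = \J \in \OP_2(A_j)$ and $\B|^{\pa}_{A_{M \setminus j}} = \A_{M \setminus j}$), the prefactor $\tfrac{1}{N^2}\tfrac{(N-(m-1))!}{(N-|\B|)!}$ expands as a product of $|\B|-(m-1)$ factors of size $N$, divided by $N^2$, and is therefore of net order $N^{|\B|-m-1}$. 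Only $|\B| = m+1$ (case~\ref{coal_none_of_two}, i.e.\ $\B = \A_{M \setminus j} \cup \J$) yields a prefactor with limit $1$; the subcases $|\B| = m$ (case~\ref{coal_one_of_two}) and $|\B| = m-1$ (case~\ref{coal_two_of_two} with $k \neq \ell$) contribute $O(1/N)$ and vanish. Hence $\varThet{\A \B}{(N)} \to \r{\J}{\!A_j}$ precisely for $\B = \A_{M \setminus j} \cup \J$ with $\J \in \OP_2(A_j)$, and $\varThet{\A \B}{(N)} \to 0$ for all other $\B \neq \A$; this matches $\varThetp{}$ entrywise, and the diagonal entries follow from $\varThet{\A \A}{(N)} = -\sum_{\B \neq \A}\varThet{\A \B}{(N)}$.

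\textbf{Convergence in distribution and structural properties.} Since $\PP(S)$ is finite, entrywise convergence of generators implies entrywise convergence of the semigroups $\exp(t\varTheta^{(N)}) \to \exp(t\varThetp{})$ for every $t \geqslant 0$ by continuity of the matrix exponential, and hence convergence of all finite-dimensional distributions started from the common deterministic initial state $\sigma$. Tightness in the Skorokhod topology is automatic because the total jump rate is uniformly bounded (at most $|S|$ blocks, each affected at rate $1$), so convergence in distribution follows from the standard Markov jump convergence theorem, cf.~\citet{EthierKurtz}. The remaining structural claims are read off directly from $\varThetp{}$: every nonzero off-diagonal entry corresponds to a transition $\A \to \B = \A_{M \setminus j} \cup \J \prec \A$, so $\{\varSigma_t'\}_{t \geqslant 0}$ can only refine the current partition over time, giving $\varSigma_\tau' \preccurlyeq \varSigma_t'$ for all $\tau > t$. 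If $A_j$ is furthermore contiguous in $S$, any $\J \in \OP_2(A_j)$ splits $A_j$ at an interior index into two blocks that are contiguous in $A_j$, and hence in $S$; a straightforward induction on the number of jumps then shows that $\varSigma_t' \in \OP(S)$ for all $t \geqslant 0$ once $\varSigma_0' \in \OP(S)$.

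\textbf{Main obstacle.} The only delicate ingredient is the entrywise limit in the first step, and there the key observation is that each coalescence of a newly-split sub-block with an existing ancestral block incurs a factor $1/N$ that the factorial prefactor cannot compensate; consequently only the completely non-coalescing split survives in the limit, which explains why $\varThetp{}$ contains splits but no coalescences. The remaining steps are either a direct reading of $\varThetp{}$ or a routine appeal to standard convergence theory for Markov chains on a finite state space.
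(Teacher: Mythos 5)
Your proposal is correct and follows essentially the same route as the paper: an entrywise order-of-magnitude analysis of $\varThet{\A\B}{(N)}$ showing that only the pure-splitting case $|\B|=m+1$ survives (the paper records this as $\varThet{}{(N)}=\varThetp{}+\OO(1/N)$), followed by the standard generator-to-semigroup-to-distribution convergence argument via \citet{EthierKurtz}, and a direct reading of $\varThetp{}$ for the refinement and $\OP(S)$-preservation claims.
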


\begin{proof}
Inspecting the $N$-dependence of the elements of $\varTheta=\varThet{}{(N)}$
in \eqref{Theta} gives the following order of magnitude
for the nondiagonal elements:
\begin{equation}\label{Theta_ON}
 \varThet{\A \B}{(N)} = 
\begin{cases}
\frac{1}{N^{m + 1 - \lvert \B \vert}}\; \r{\J}{\!A_j} \big (1 + \OO \big(\frac{1}{N}\big)\big), & \text{if }
 \B|^{\pa}_{A_j}=\J, \,  \B|^{\pa}_{A_{M \setminus j}}=\A_{M \setminus j}  \text{ for }
j \in M \,, \J \in \OP_2(A_j), \\[2mm]
 \frac{1}{N} \big( \r{\,\1}{\!A_j} + \r{\,\1}{\!A_k} \big) + \OO\big(\frac{1}{N^2}\big), 
& \text{if } \B = \A_{M \setminus \{j,k\}} \cup A_{\{j,k\}} \text{ for some }
j\neq k \in M, \\[2mm]
0,& \text{for all other } \B \neq \A.
\end{cases}
\end{equation}

Obviously, $\varThet{}{(N)} = \varThetp{} + \OO(1/N)$, which proves convergence of
the sequence of generators of $\{\varSigma^{(N)}_t\}_{t \geqslant 0}^{}$ to that of $\{\varSigma'_t\}_{t \geqslant 0}^{}$. This entails convergence of the
corresponding sequence of semigroups.
With the help of Theorems~4.2.11 and 4.9.10 of \citet{EthierKurtz}, this guarantees
convergence of $\{\varSigma^{(N)}_t\}_{t \geqslant 0}^{}$ to
$\{\varSigma'_t\}_{t \geqslant 0}^{}$ in distribution.

The remainder of the statement is obvious since, under $\varThetp{}$,  the only 
transitions are those that involve the
refinement of a single block, say $A_j$,  into two blocks 
ordered in $A_j$. If $\varSigma'_0$ is ordered in $S$, then all its blocks
are ordered in $S$, and all blocks of $\varSigma'_t$ will be ordered 
in $S$ for all times.
\end{proof}

\begin{remark}
Obviously, in the limit, ancestral material that has been separate will  never come together again
in one individual, such that there are no coalescence events. When starting with $\varSigma'_0=\{S\}$, the genealogy may be represented by a binary tree, which successively branches into smaller  segments; for other initial conditions, one gets a corresponding collection (i.e.\ a forest) of binary trees. We call these trees \emph{ancestral recombination trees} or \emph{ART}s;
a discrete-time analogue was studied by \citet{BaakeWangenheim}.
\end{remark}

We now turn to the diffusion limit and use the factor $N$ rather than (the more common) $2N$ since our $N$ is the \emph{haploid} population size. 
Here, one considers a sequence of processes
in which time is sped up by a factor of 
$N$ and the recombination 
probabilities $\r{\A}{}$ are rescaled such that 
$\lim_{N \to \infty} N \r{\A}{} \to \o{\A}{}$, $\o{\A}{}$ a constant,
for  $\A \in \OP_2(S)$; consequently, 
$\r{\,\1}{} \to 1$
as $N \to \infty$. Note that the $\o{\A}{}$ are \emph{rates} rather
than probabilities. 
The corresponding ARG is the
obvious generalisation of Hudson's original  ARG to $n$  loci,
which we formulate here in our framework for the sake of completeness, as
follows.  Every ordered pair of lines
coalesces at rate $1$; every line splits into two at rate  
$\o{\A}{}$ for every $\A \in \OP_2(S)$, and the  ancestral material 
is distributed between the new lines according to $\A$. 

In this formulation, however, certain silent events are included, 
namely those events that happen in non-ancestral material
flanking the ancestral parts. These events do not affect the partitioning
of ancestral material and  may be removed by working with
the marginalised recombination rates instead.  That is, 
if a sequence currently carries a set $U$ of ancestral sites, then the
relevant recombination rates (in the diffusion limit) are 
$\o{\ts\B}{U}$, with $\B \in \OP_2(U)$, which are defined as 
in~\eqref{def-marginal-rates} but with $r$ replaced by $\varrho$. 
Analogous modifications where the recombination rates depend on the (continuous) region spanned by ancestral material have been investigated by \citet{WiufHein} as well as \citet{McVeanCardin}.

If we now restrict attention to the ancestry
of $n$ loci partitioned between $m$ individuals, we obtain
the \emph{marginal version} of the ARG, which may be formulated as follows.

\begin{definition}[Marginalised $n$-locus ARG] \label{margARG}
Start with the set of $n$ sites distributed across $m \leqslant n$ individuals
(or lines) according to a partition $\varSigma_0''$ with $m$ parts. Throughout
the process, every line is identified with the ancestral material it carries. If it currently
carries ancestral sites $U \subseteq S$, it splits into $\J \in \OP_2(U)$ at rate  $\o{\J}{U}$. 
Every ordered pair of lines coalesces at rate $1$, and so do the ancestral sites they
carry.
That is, the marginalised ARG is the partitioning process $\{\varSigma''_t\}_{t \geqslant 0}^{}$ defined by the
generator $\varThetpp{}$ with nondiagonal elements
\[
\varThetpp{\A \B}=\begin{cases}
\o{\J}{A_j}, & \text{if } \B=\A_{M \setminus j} \cup \J 
\text{ for some } j \in M, \J \in \OP_2(A_j), \\[2mm]
2, & \text{if } \B \succ \A \text{ and } |\B| = |\A| - 1, \\[2mm]
0, & \text{for all other } \B \neq \A.
\end{cases}
\]
\end{definition}

\begin{proposition}[Diffusion limit of the partitioning process]
In the diffusion limit,  the sequence of partitioning processes 
$\{\varSigma^{(N)}_{Nt}\}_{t\geqslant 0}^{}$ with initial states $\varSigma^{(N)}_0
\equiv \sigma$ converges in distribution to the
process $\{\varSigma_t''\}_{t\geqslant 0}^{}$ with initial state 
$\varSigma''_0 = \sigma$ and generator $\varThetpp{}$.
\end{proposition}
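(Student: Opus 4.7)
The plan is to follow exactly the same strategy as Proposition~\ref{detlimit}: show entrywise convergence of the generator $N\varThet{}{(N)}$ of the sped-up process $\{\varSigma^{(N)}_{Nt}\}_{t\geqslant 0}$ to $\varThetpp{}$, and then invoke Theorem~4.9.10 of \citet{EthierKurtz} once more to conclude convergence in distribution. Since $\PP(S)$ is finite, entrywise convergence of the generator matrix automatically gives convergence of the associated semigroups, and the convergence of initial states is trivial because $\varSigma_0^{(N)}\equiv\sigma$.

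The case analysis is essentially read off from~\eqref{Theta_ON}. For a pure splitting transition $\B=\A_{M\setminus j}\cup\J$ with $\J\in\OP_2(A_j)$, so $|\B|=m+1$, we have
\[
N\,\varThet{\A\B}{(N)} \,=\, N\,\r{\J}{\!A_j}\,\frac{(N-(m-1))(N-m)}{N^2}\bigl(1+\OO(1/N)\bigr),
\]
and the marginalised rate inherits the scaling from its definition~\eqref{def-marginal-rates} as a finite sum, yielding $N\r{\J}{\!A_j}\to \o{\J}{\!A_j}$, while the combinatorial factor tends to $1$. For a pure coalescence transition $\B=\A_{M\setminus\{j,k\}}\cup A_{\{j,k\}}$, note that $\r{\,\1}{\!A_j}=1-\sum_{\J\in\OP_2(A_j)}\r{\J}{\!A_j}$, and each summand is $\OO(1/N)$, so $\r{\,\1}{\!A_j}\to 1$ for every $j$. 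Then
\[
N\,\varThet{\A\B}{(N)} \,=\, \frac{2}{N}+\frac{N-1}{N}\bigl(\r{\,\1}{\!A_j}+\r{\,\1}{\!A_k}\bigr)\,\longrightarrow\,2,
\]
matching the coalescence rate in Definition~\ref{margARG}.

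The remaining mixed transitions, in which a block $A_j$ splits into $\J\in\OP_2(A_j)$ \emph{and} one or both parts immediately coalesce with other blocks, correspond to $|\B|\leqslant m$. By~\eqref{Theta_ON} their rates in $\varThet{}{(N)}$ are of order $\r{\J}{\!A_j}/N$ or $\r{\J}{\!A_j}/N^2$, i.e.\ $\OO(1/N^2)$ after using $N\r{\J}{\!A_j}\to\o{\J}{\!A_j}$, so after multiplication by $N$ they still vanish in the limit. Convergence of the diagonal entries then follows from the row-sum-zero property. Putting the three cases together gives $N\varThet{\A\B}{(N)}\to\varThetpp{\A\B}$ for every pair $\A,\B\in\PP(S)$, which is all that is needed.

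The only place that requires a little care is the transfer of the scaling $N\r{\A}{}\to\o{\A}{}$ (originally stated for $\A\in\OP_2(S)$) to the marginalised probabilities $\r{\J}{\!A_j}$: this is immediate from the definition~\eqref{def-marginal-rates}, which is a finite sum, but should be stated explicitly since it is the only non-trivial bookkeeping step. Everything else is a direct order-of-magnitude inspection analogous to the deterministic case, followed by the same Ethier--Kurtz appeal.
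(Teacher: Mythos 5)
Your proposal is correct and follows exactly the route taken in the paper: multiply the nondiagonal entries by $N$ (from the time rescaling), read off the limits from the order-of-magnitude expression~\eqref{Theta_ON} using $\r{\,\1}{U} \to 1$ and $N \r{\J}{U} \to \o{\J}{U}$, and conclude convergence in distribution via the same Ethier--Kurtz argument as in Proposition~\ref{detlimit}. Your explicit remark that the scaling transfers to the marginalised probabilities through the finite sum in~\eqref{def-marginal-rates} is a detail the paper leaves implicit, but it is the same proof.
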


\begin{proof}
Due to the rescaling of time, the generator of 
$\{\varSigma^{(N)}_{Nt}\}_{t\geqslant 0}^{}$ has nondiagonal elements
$N \varThet{\A \B}{(N)}$. Referring back to~\eqref{Theta_ON}, they converge
to $\lim_{N \to \infty} N \varThet{\A \B}{(N)} = \varThetpp{\A \B}$,
since we have $\r{\,\1}{U} \to 1$ and $N \r{\J}{U} \to \o{\J}{U}$ for $\J \in \OP_2(U)$.
With the same argument as in the proof of Proposition~\ref{detlimit},
one obtains convergence in distribution as claimed.
\end{proof}

\begin{remark}
As was to be expected, only pure splitting events and pure coalescence
events survive in the diffusion limit. The `mixed transitions', which
involve both splitting and coalescence (i.e.\ the dashed lines
in Figure~\ref{fig:ARP})  vanish under the rescaling;
see also \citet[Fig.~5.11]{HeinSchierupWiuf}. Let us note that several other variants of the recombination process lead to the same diffusion limit. For example, this is true of the simpler (but biologically less realistic) versions of the continous-time Moran model with recombination where recombination is a parallel process that happens independently of reproduction (rather than coupled to reproduction as assumed here), see \citet{BaakeHerms}. Even the discrete-time Wright-Fisher model with recombination lies in the domain of attraction of the diffusion limit.
\end{remark}


\section{Recombinators and sampling functions}
\label{sec:recombinators}

In this section, we will have a closer look at three operators associated with recombination and how they are related to each other. We start by generalising our  recombinators, then introduce closely related sampling functions and finally multilocus correlation functions, known as linkage disequilibria.

\paragraph{Recombinators.} We have already met $\R{\A}{}$ for $\A \in \OP_{\leqslant 2}(S)$;
we now need the generalisation to arbitrary $\A \in \PP(S)$. For 
$\omega \in \M_+(\XX)\setminus 0$, we first define the \emph{non-normalised recombinator} via
\begin{equation}\label{recombinator_general}
\Rbar{\A}{} (\omega) = \big(\pi^{}_{A_1}\!.\omega \big)\,\otimes \cdots \otimes\, \big(\pi^{}_{A_m}\!.\omega \big),
\end{equation}
where it is implied that the product measure refers to the ordering of the sites as specified by the set $S$. In words, $\Rbar{\A}{}$ turns $\omega$ into the product of its marginals with respect
to the blocks in $\A$. 
We will throughout denote non-normalised mappings by an overbar. 
Clearly,
$\Rbar{\tts \varnothing}{}(\omega) = \| \omega \| $, $\Rbar{\tts \1}{}(\omega) = \omega$ and
$  \Vert \Rbar{\A}{}(\omega)  \Vert = \| \omega \|^{\lvert \A \rvert}$.
The corresponding normalised version is 
\begin{equation}\label{recombinator_normalised}
 \R{\A}{}(\omega): = \frac{\Rbar{\A}{}(\omega)}{\big \| \Rbar{\A}{}(\omega) \big \|}\,,  
\end{equation}
which is well-defined since $\omega \neq 0$.
Obviously, $\R{\A}{}(\omega) = \Rbar{\A}{}(\omega /\| \omega \|)$ and $\R{\A}{}(\omega)$
is a probability measure on $\XX$, which
coincides with~\eqref{recombinator_2} for $\A \in \OP_{\leqslant 2}(S)$.

Let us now give a probabilistic interpretation for the case that
a recombinator $\R{\A}{}$ acts on a certain population described by a counting measure 
$z \in E$. 
For the moment, attach labels from the collection $\mathbb{L} := \{1, 2, \dotsc, N\}$ to
the $N$ individuals in the population in a one-to-one manner, and let these individuals 
have (random) types
$X^1_t,X^2_t,\dotsc,X^N_t \in \XX$ at time $t$. The type distribution then is
$Z_t = \sum_{k=1}^N \delta_{X^k_t}$. For $U \subseteq S$ and 
$k \in \mathbb{L}$, let 
$X^k_{t,U} := \pi^{}_U (X^k_t)$,
and consider the following procedure.
Let a partition
$\A = \{A_1, \dotsc, A_m\}$ of $S$ together with a collection of labels 
$\ell = (\ell_1, \dotsc, \ell_m) \in \mathbb{L}^m$ associated with
the blocks be given, i.e. $(\A, \ell) := \{(A_1,\ell_1),  \dotsc, (A_m, \ell_m)\}$. 
Then, piece
together a sequence by taking the sites in $A_1$ from individual $\ell_1$, 
the sites in $A_2$ from individual $\ell_2$, $\dotsc$, and the sites in 
$A_m$ from individual $\ell_m$. The resulting sequence is
$X^\ell_{t,\A} := (X^{\ell_1}_{t,A_1}, \dotsc, X^{\ell_m}_{t,A_m})$. \\
Now, let $\pazocal{L} \in \mathbb{L}^m$ be an $m$-fold random drawing \emph{with replacement} from $\mathbb{L}$, i.e. $\pazocal{L}_1, \ldots, \pazocal{L}_m$ are independent and identically distributed uniform random variables with support $\mathbb{L}$. 
We are now interested in the (random) sequence
\[
X_{t,\A}^{} := X_{t,\A}^{\pazocal{L}}
\]
and the corresponding counting measure 
\[
 \lvert \{ X_{t,\A}^{} = x \}  \rvert = |\{ \ell \in \mathbb{L}^m : X_{t,\A}^\ell = x \}|. 
 \]
This  counts how often one obtains sequence $x$
when performing the above procedure on a population $Z_t$ and all
combinations of individuals are included. 
Since we draw the labels $\pazocal{L}_j$ with replacement (and therefore independently), we can decompose the counting measure 
\begin{equation}\label{sampling_with} 
\big\lvert \big\{ X_{t,\A} = x  \big\} \big\rvert =
\prod_{j \in M} \big\lvert \big\{  \ell_j \in \mathbb{L} : X_{t,A_j}^{\ell_j} = x_{A_j}^{} \big\} \big\rvert
= \prod_{j \in M} \big (\pi^{}_{A_j} . Z_t \big ) (x^{}_{A_j}) 
= \big ( \Rbar{\A}{} (Z_t) \big )(x).
\end{equation}
Clearly, $\R{\A}{} (Z_t)$, the corresponding normalised version, is the
type distribution  that results when a sequence is created by 
taking the letters for the blocks in $\A$ from individuals
 drawn uniformly and with replacement from  the population $Z_t$.
So 
\[
\big (\R{\A}{} (z) \big )(x) = \P \big [X_{t,\A} = x \mid Z_t = z \big],
\]
where $\P$ denotes probability. Note that the left-hand side depends
on time only through the value $z$ of $Z_t$. 

\paragraph{Sampling function.}
For $\A \in \PP(S)$ and $\omega \in \M_+(\XX)\setminus 0$,   
we now define our \emph{sampling function} 
\begin{equation}\label{H_Summe_R}
\Hbar{\A}{} (\omega) := \sum_{\underdot \B \> \A} \mu(\A,\B) \; \Rbar{\tts \B}{} (\omega),
\end{equation}
where $\mu$ is the M\"obius function in \eqref{moebius_function}.
$\Hbar{\A}{} (\omega)$ is not a positive measure in general; but it will
turn out as positive for the important case where $\omega \in E$ with
$\| \omega \| \geqslant |\A|$, see Lemma~\ref{HAnorm}. We will 
therefore postpone the normalisation step.
In any case, M\"obius inversion (compare~\eqref{Moebius_sum} and~\eqref{Moebius_inv}) immediately yields
the inverse of~\eqref{H_Summe_R}:
\begin{fact}\label{R_Summe_H}
For every $\A \in \PP(S)$,
\[ \pushQED{\qed}
\Rbar{\A}{}(\omega) =\sum_{\underdot \B \> \A}  \Hbar{\tts \B}{}(\omega) . \qedhere \popQED
\] 
\end{fact}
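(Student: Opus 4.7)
The plan is to recognise this as an immediate consequence of the M\"obius inversion principle stated in Section~\ref{sec:prelim}, applied to the definition~\eqref{H_Summe_R}.

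The key observation is that~\eqref{H_Summe_R}, read for a fixed $\omega$, has exactly the form of~\eqref{Moebius_inv} with $f(\A):=\Hbar{\A}{}(\omega)$ and $g(\A):=\Rbar{\A}{}(\omega)$. By M\"obius inversion from above, such a relation is equivalent to $g(\A)=\sum_{\underdot{\B}\succcurlyeq\A}f(\B)$, which is precisely the claim. So the first thing I would do is simply cite this correspondence.

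For readers who prefer a self-contained verification, the direct check is short: substituting~\eqref{H_Summe_R} into the right-hand side and exchanging the order of summation yields
\begin{equation*}
\sum_{\underdot{\B}\succcurlyeq\A}\Hbar{\B}{}(\omega) \;=\; \sum_{\underdot{\C}\succcurlyeq\A}\Big(\sum_{\A\preccurlyeq\underdot{\B}\preccurlyeq\C}\mu(\B,\C)\Big)\Rbar{\C}{}(\omega).
\end{equation*}
The inner sum is the dual companion of~\eqref{help_moeb_Z} and equals $\delta_{\A,\C}$, as follows just as quickly from the inductive definition~\eqref{sum_moebius}. Only the term $\C=\A$ survives, leaving $\Rbar{\A}{}(\omega)$.

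The only point that might look like an obstacle is that $\Rbar{\A}{}(\omega)$ and $\Hbar{\A}{}(\omega)$ are measures on $\XX$ rather than scalars in $\CC$. This is harmless: M\"obius inversion proceeds componentwise, so I would apply the scalar version pointwise to $\A\mapsto(\Rbar{\A}{}(\omega))(x)$ for each $x\in\XX$ and collect, or simply invoke the remark in Section~\ref{sec:prelim} that inversion extends to vector- and operator-valued maps. So there is really no substantial difficulty: the content of the statement is that~\eqref{H_Summe_R} is an invertible linear transformation on functions of $\A\in\PP(S)$, and M\"obius theory supplies the inverse directly.
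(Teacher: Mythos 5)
Your proposal is correct and matches the paper's own treatment: the Fact carries no separate proof precisely because, as you say, it is the M\"obius inversion of the definition~\eqref{H_Summe_R}, applied componentwise (or to operators, as the paper remarks at the end of Section~\ref{sec:prelim}). Your optional direct check is also sound; the only point worth flagging is that the inner sum you use, $\sum_{\A\preccurlyeq\underdot{\B}\preccurlyeq\C}\mu(\B,\C)=\delta_{\A,\C}$, is the dual of~\eqref{help_moeb_Z} (summation over the \emph{first} argument of $\mu$), which holds because $\mu$ is a two-sided inverse of the zeta function in the incidence algebra rather than following verbatim from the inductive definition~\eqref{sum_moebius}.
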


We can now give $\Hbar{\A}{}$ a meaning by
reconsidering the procedure that led to~\eqref{sampling_with} but, this
time, individuals are not replaced. 
Therefore, let $\lvert \A \rvert = m \leqslant N$ and $\widetilde{\pazocal{L}} \in \mathbb{L}^m$ be an $m$-fold random drawing \emph{without replacement} from $\mathbb{L}$, i.e. $\widetilde{\pazocal{L}}_1, \ldots, \widetilde{\pazocal{L}}_m$ are now \emph{dependent}.
Then we look at 
\begin{equation}\label{sampling_without}
\widetilde X_{t,\A} := X_{t,\A}^{\tilde{\pazocal{L}}}
\end{equation} 
with the corresponding counting measure
$  \lvert \{ \widetilde X_{t,\A} = x  \} \rvert$.
Since individuals are not replaced, 
an  expression for $\lvert \{ \widetilde X_{t,\A} = x  \} \rvert$
analogous to~\eqref{sampling_with} is 
not immediate.
Instead, we resort to an inclusion-exclusion
argument and prove 
\begin{proposition}\label{prop:sampling_without}
For $\A \in \PP(S)$ with $\lvert \A \rvert = m \leqslant N$ and $Z_t \in E$, we have
\[
\big\lvert \big\{ \widetilde X_{t,\A} = x  \big\} \big\rvert 
= \big\lvert \big\{ \ell \in \mathbb{L}^m : X_{t,\A}^\ell = x \text{ and } \ell_i \neq \ell_j \, \forall\, i \neq j \big\} \big\rvert  
= \big (\Hbar{\A}{} (Z_t) \big )(x).
\]
\end{proposition}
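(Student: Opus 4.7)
The plan is to express the with-replacement count $\Rbar{\A}{}(Z_t)(x)$ as a sum of without-replacement counts over coarsenings of $\A$, and then invoke Möbius inversion from above (Fact~1 goes in one direction; we want the other).

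First I would stratify the labelings that appear in the definition~\eqref{sampling_with}. Every tuple $\ell=(\ell_1,\dots,\ell_m)\in\{1,\dots,N\}^m$ induces an equivalence relation on $M=\{1,\dots,m\}$ by $j\sim k \iff \ell_j=\ell_k$. Collapsing the blocks of $\A$ according to this equivalence yields a coarsening $\B\succcurlyeq \A$ of $S$, and I would denote by $L_\B$ the set of labelings whose induced coarsening is exactly $\B$. The sets $L_\B$ partition $\{1,\dots,N\}^m$, so
\[
\big\lvert \big\{ X_{t,\A} = x \big\} \big\rvert
= \sum_{\B \succcurlyeq \A} \, \big\lvert \big\{ \ell \in L_\B : X^{\ell}_{t,\A}=x\big\} \big\rvert.
\]

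Second I would observe that a labeling $\ell\in L_\B$ is canonically described by a labeling $\tilde\ell$ of the blocks of $\B$ with \emph{pairwise distinct} entries in $\{1,\dots,N\}$, together with the information (already encoded in $\B$) of which blocks of $\A$ merge into which block of $\B$. Under this correspondence, the event $\{X^{\ell}_{t,\A}=x\}$ becomes the event that, for each block $B\in\B$, the individual labelled $\tilde\ell_B$ has type matching $x$ on all sites of $B=\bigcup_{A_j\subseteq B}A_j$. This is precisely the event appearing in~\eqref{sampling_without} for the partition $\B$, so
\[
\big\lvert \big\{ \ell \in L_\B : X^{\ell}_{t,\A}=x\big\} \big\rvert
= \big\lvert \big\{ \widetilde X_{t,\B}=x\big\} \big\rvert.
\]
Combining the two displays with~\eqref{sampling_with} gives the key identity
\[
\big(\Rbar{\A}{}(Z_t)\big)(x) = \sum_{\B \succcurlyeq \A} \big\lvert \big\{\widetilde X_{t,\B}=x\big\}\big\rvert.
\]

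Finally, applying Möbius inversion from above (formulas~\eqref{Moebius_sum} and~\eqref{Moebius_inv}) with $g(\A)=\Rbar{\A}{}(Z_t)(x)$ and $f(\A)=\lvert \{\widetilde X_{t,\A}=x\}\rvert$ inverts this relation to
\[
\big\lvert \big\{\widetilde X_{t,\A}=x\big\}\big\rvert
= \sum_{\B \succcurlyeq \A} \mu(\A,\B)\, \big(\Rbar{\B}{}(Z_t)\big)(x)
= \big(\Hbar{\A}{}(Z_t)\big)(x),
\]
by the definition~\eqref{H_Summe_R} of $\Hbar{\A}{}$. The hypothesis $|\A|\leqslant N$ is needed only to ensure that $L_\A\neq\varnothing$, so that the without-replacement event in~\eqref{sampling_without} is meaningful; for coarser $\B$, $|\B|<|\A|\leqslant N$ automatically. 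The main subtlety is the bookkeeping in the second step, namely checking that stratifying by the equivalence pattern of $\ell$ correctly translates sampling with replacement on $\A$ into sampling without replacement on all coarsenings $\B\succcurlyeq\A$; once this combinatorial identification is made, the rest is a direct application of Möbius inversion.
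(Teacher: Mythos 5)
\begin{quote}
Your proof is correct and follows essentially the same route as the paper: the stratification of labelings $\ell$ by their induced equivalence pattern is exactly the paper's bijection between pairs $(\A,\ell)$ and pairs $(\B,\tilde\ell)$ with $\B \succcurlyeq \A$ and distinct labels, yielding the decomposition $\lvert\{X_{t,\A}=x\}\rvert = \sum_{\B\succcurlyeq\A}\lvert\{\widetilde X_{t,\B}=x\}\rvert$, after which both arguments conclude by M\"obius inversion against the defining relation~\eqref{H_Summe_R}. No gaps.
\end{quote}
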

\begin{proof}
Fix a given partition $\A\in \PP(S)$ with 
$\lvert \A \rvert = m \leqslant N$.  
For every $\ell \in \{1,2,\dotsc, N\}^m$,
the pair $(\A,\ell)$  uniquely defines a pair $(\B, \tilde \ell)$, 
where $\tilde \ell \in \{  \ell \in \{1,2,\dotsc, N\}^{\lvert \B \rvert} : \ell_i \neq \ell_j \, \forall \,
i \neq j \}$ and $\B \> \A$, as follows. Join all blocks of $\A$
that have the same label, and attach that label to the new block.
The result is $(\B, \tilde \ell)$. The other way round, every 
$(\B, \tilde \ell)$ with $\B \> \A$ and
$\tilde \ell \in \{  \ell \in \{1,2,\dotsc, N\}^{\lvert \B \rvert}: \ell_i \neq \ell_j \, \forall\, i \neq j \}$ uniquely defines the labelling $\ell$
of the blocks of $\A$ (keep in mind that $\A$ is fixed): block 
$A_k \in \A$ receives the
label of that  block $B_j \in \B$ in which it is contained.
We can therefore identify the set 
$\{(\A, \ell) : \ell \in \{1,2,\dotsc, N\}^m\}$ with the set 
$\bigcup_{\underdot \B \> \A} \{(\B, \tilde \ell) : 
\tilde \ell \in \{ \ell \in \{1,2,\dotsc, N\}^{|\B|} : \ell_i \neq \ell_j
\, \forall\,  i \neq j \}\}$ and \emph{decompose} the event $\{  X_{t,\A} = x \} = \dot{\bigcup}_{\underdot \B \> \A} \{ \widetilde X_{t,\B} = x   \}$, 
which entails
\[
 \big\lvert \big\{  X_{t,\A} = x \big \} \big\rvert
  = \sum_{\underdot \B \> \A} \big \lvert \big \{ \widetilde X_{t,\B} = x \big \} \big \rvert.
\]
By~\eqref{sampling_with}, the left-hand side equals $(\Rbar{\A}{} (Z_t))(x)$.
Due to the M\"obius inversion principle (applied backward), 
$\lvert \{ \widetilde X_{t,\B} = x  \} \rvert$ on
the right-hand side must equal $(\Hbar{\tts \B}{} (Z_t) )(x)$, as claimed. 
\end{proof} 
 
\begin{lemma}\label{HAnorm}
For $\A \in \PP(S)$ with $|\A|=m \leqslant N$ and $z \in E$,
$\Hbar{\A}{}(z)$ is a positive measure with
\[
\big \| \Hbar{\A}{} (z) \big \|= N \,(N - 1)\cdots (N - m + 1) > 0.
\]
\end{lemma}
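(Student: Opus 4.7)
The plan is to deduce this directly from Proposition~\ref{prop:sampling_without}, which, applied with $Z_t = z$, interprets $\bigl(\Hbar{\A}{}(z)\bigr)(x)$ as the cardinality $\lvert\{\widetilde{X}_{t,\A} = x\}\rvert$, namely the number of ways to reconstruct the sequence $x$ by choosing $m = |\A|$ pairwise \emph{distinct} individuals $\ell_1, \ldots, \ell_m$ from the population $z$ and reading off the letters in block $A_j$ from individual $\ell_j$. Since this count is a non-negative integer for every $x$, the positivity of $\Hbar{\A}{}(z)$ as a measure is immediate.

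For the norm, I would sum $\bigl(\Hbar{\A}{}(z)\bigr)(x)$ over all $x \in \XX$. By the disjoint-union definition~\eqref{sampling_without}, each ordered tuple $\ell = (\ell_1, \ldots, \ell_m)$ of pairwise distinct labels in $\{1, \ldots, N\}$ produces exactly one pieced-together sequence, so interchanging the order of summation collapses the double sum into
$$\bigl\| \Hbar{\A}{}(z) \bigr\| \;=\; \sum_{x \in \XX} \lvert\{\widetilde{X}_{t,\A} = x\}\rvert \;=\; \bigl\lvert\{\ell \in \{1,\ldots,N\}^m : \ell_i \neq \ell_j \text{ for all } i \neq j\}\bigr\rvert \;=\; \frac{N!}{(N-m)!}.$$
The hypothesis $m \leqslant N$ ensures this falling factorial is strictly positive.

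There is essentially no obstacle once Proposition~\ref{prop:sampling_without} is in hand; the only care needed is that its identity is pointwise in $z \in E$ and not tied to a specific time of the Moran process, so that the count interpretation applies to every admissible population configuration. As an alternative one could bypass the sampling interpretation and expand $\bigl\| \Hbar{\A}{}(z) \bigr\| = \sum_{\B \succcurlyeq \A} \mu(\A,\B) \, N^{|\B|}$ via the definition~\eqref{H_Summe_R} together with $\bigl\|\Rbar{\B}{}(z)\bigr\| = N^{|\B|}$, then invoke the classical M\"obius identity that evaluates such a sum on the upper interval of $\A$ in $\PP(S)$ to $N(N-1)\cdots(N-m+1)$. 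Positivity would then require a separate argument, so the route via Proposition~\ref{prop:sampling_without} is both shorter and more informative.
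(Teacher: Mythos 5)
Your argument is correct and coincides with the paper's own proof: both deduce positivity from the counting interpretation in Proposition~\ref{prop:sampling_without} and compute the norm as the number of ordered samples of $m$ distinct individuals from $N$, namely $N!/(N-m)! = N(N-1)\cdots(N-m+1)$. The alternative M\"obius-sum route you sketch is a valid aside, but your main line of reasoning is essentially the paper's.
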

\begin{proof}
Since, under the given assumptions, $(\Hbar{\A}{}(z))(x) 
= \lvert \{ \widetilde X_{t,\A} = x \mid Z_t=z  \} \rvert \geqslant 0$ for
all $x$ by Proposition~\ref{prop:sampling_without}, it is a positive measure,
and its norm can be evaluated via  
\[
\big \| \Hbar{\A}{}(z) \big \| =  
\sum_{x \in \XX} \big \lvert \big \{ \widetilde X_{t,\A} = x \mid Z_t=z \big \}\big  \rvert.
\]
By means of~\eqref{sampling_without}, this equals the number of possibilities
of how to choose $m$ labelled individuals out of $N$ ones \emph{without} 
replacement, where the order
is respected; this is $ N \,(N -1)\cdots (N -m+1)$, which is positive
since $m \leqslant N$. 
\end{proof}

Under the assumptions of Proposition~\ref{prop:sampling_without},
we can therefore define the normalised version of $\Hbar{\A}{}(z)$:
\begin{equation}\label{H_normalised}
\H{\A}{}(z) :=  \frac{\Hbar{\A}{}(z)}{\big \| \Hbar{\A}{}(z) \big \|} =
\frac{(N - m)!}{N!} ~ \Hbar{\A}{}(z).
\end{equation}
$\H{\A}{} (z)$ is the type distribution that results when a sequence is created by taking
the letters for the blocks as encoded by $\A$ from individuals
 drawn uniformly and \emph{without replacement} from  the population $z$,
hence
\[
\big ( \H{\A}{} (z) \big ) (x) = \P \big [\widetilde X_{t,\A} = x \mid Z_t = z \big].
\]
$\H{\A}{}$ will later serve as duality function.
The situation described here is exactly what happens when a sample is
taken in our marginal ancestral recombination process: either the initial sample
(according to $\varSigma_0$, from the present population $Z_t$) or 
the ancestral one (according to $\varSigma_t$, from the initial population
$Z_0$) -- hence
our name \emph{sampling function}.
In this light, Fact~\ref{R_Summe_H} expresses counting with 
replacement in terms of counting  without replacement, provided $\omega$ is a counting measure. 

It is also instructive to express the normalised sampling functions
in terms of the normalised recombinators. For 
$z \in E$ and $|\A| \leqslant N$, this gives, via~\eqref{recombinator_normalised}, 
\[
\H{\A}{}(z)  = \sum_{\underdot \B \> \A} 
\frac{(N - \lvert \A \rvert)!\, \Ncard{\B}}{N!} \, \mu(\A,\B)\, \R{\tts \B}{} (z).
\]
Note that $(N - \lvert \A \rvert)!\, \Ncard{\B}/N! 
= \OO(\Ncardd{\B}{\A})$. This illustrates how the
inclusion of coarser partitions yields higher-order correction terms.
The other way round, using~\eqref{recombinator_normalised}, 
Fact~\ref{R_Summe_H}, and~\eqref{H_normalised}, one gets  
\begin{equation}\label{RAHB}
\R{\A}{} (z) = \sum_{\underdot \B \> \A} 
\frac{N!}{\Ncard{\A}(N - \lvert \B \rvert)!}\, \H{\tts \B}{} (z).
\end{equation}

\paragraph{Restriction to subsystems.}
Recall that we write the restriction of a measure $\omega \in \M_+(\XX)$ to a subspace $\XX_U:=\bigtim_{i\in U} \XX_i$ of $\XX$  as $\pi^{}_U .\omega := \omega \circ \pi^{-1}_U$, which corresponds to  marginalisation. 
Clearly we can also define recombinators for any non-empty
subset $U \subseteq S$ and any partition $\A = \{A_1, \dotsc, A_m\} \in \PP(U)$ as 
$\Rbar{\A}{U} (\pi^{}_U .\omega)$, in perfect analogy with 
$\Rbar{\A}{S} (\omega)$ for $\A \in \PP(S)$, which is 
$\Rbar{\A}{} (\omega)$; and likewise for $\R{\A}{U}$, $\Hbar{\A}{U}$, and
$\H{\A}{U}$ (if $\omega \neq 0$). For clarity, we sometimes denote the subsystem by a superscript. 
However, as in the case of the marginal recombination probabilities,
it can be dispensed with since $U=\cup_{j=1}^{|\A|} \, A_j$ if $\A \in \PP(U)$.
The interpretation in terms of sampling,
as well as Fact~\ref{R_Summe_H}, carry over.

Let us collect some basic properties of recombinators:

\begin{fact} \label{properties_R}
For $\A, \B \in \PP(S)$ and $U,V\subseteq S$ with $S=U\, \dot \cup\, V$ one has
\begin{enumerate}[label=(\Alph*)]
\item \label{Rinf} 
$\R{\A}{} \R{\tts \B}{} = \R{\A \wedge \B}{}$. \vspace*{1.8mm}
\item \label{R_on_subsystem}
$\pi^{}_U. \R{\A}{S} (\omega)= \R{\A|^{\pa}_U}{U}(\pi^{}_U.\omega)$.
\item \label{productR}
If in addition $\A\< \{U,V\}$, then $\Rbar{\A}{S} = \Rbar{\A|^{\pa}_U}{U} \otimes \, \Rbar{\A|^{\pa}_V}{V}$. Explicitly, this reads \\[.3em]
$\Rbar{\A}{S}(\omega)= 
\big (\Rbar{\A|^{\pa}_U}{U} \otimes \, \Rbar{\A|^{\pa}_V}{V} \big)(\omega) =
\big (\Rbar{\A|^{\pa}_U}{U}(\pi^{}_U . \omega) \big ) \otimes  
\big (\Rbar{\A|^{\pa}_V}{V} (\pi^{}_V . \omega) \big ). $
\end{enumerate}
\end{fact}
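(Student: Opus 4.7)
The three identities all reduce to one basic fact: for a tensor product measure $\mu = \bigotimes_{i} \mu^{}_i$ on a product space, marginalization distributes over factors, with every factor whose retained index set is empty collapsing to its norm $\|\mu^{}_i\|$. My plan is to verify each claim at the level of the non-normalized recombinators $\Rbar{}{}$, where the bookkeeping is cleanest, and then deduce the normalized statement by dividing through by the appropriate power of $\|\omega\|$, using $\|\R{\A}{}(\omega)\| = 1$ together with $\|\pi^{}_U . \omega\| = \|\omega\|$.

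For~\ref{productR}, the condition $\A \< \{U, V\}$ means every block of $\A$ lies entirely in $U$ or entirely in $V$, so $\A$ is the disjoint union $\A|^{\pa}_U \, \dot\cup \, \A|^{\pa}_V$. The tensor product defining $\Rbar{\A}{S}(\omega)$ then splits into an outer product of the two sub-products indexed by $\A|^{\pa}_U$ and $\A|^{\pa}_V$, respectively, and for any block $A \subseteq U$ we have $\pi^{}_A . \omega = \pi^{}_A . (\pi^{}_U . \omega)$; identifying the two halves as $\Rbar{\A|^{\pa}_U}{U}(\pi^{}_U . \omega)$ and $\Rbar{\A|^{\pa}_V}{V}(\pi^{}_V . \omega)$ closes the argument. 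For~\ref{R_on_subsystem}, I would marginalize the factorwise expression $\Rbar{\A}{S}(\omega) = \bigotimes_{A_i \in \A} \pi^{}_{A_i} . \omega$ onto $\XX_U$: the blocks $A_i$ with $A_i \cap U \neq \varnothing$ yield $\pi^{}_{A_i \cap U} . \omega$, and these intersections are precisely the blocks of $\A|^{\pa}_U$, while each remaining block $A_i$ (those with $A_i \cap U = \varnothing$) contributes a scalar factor $\|\omega\|$. Dividing by $\|\omega\|^{|\A|}$ on the left and using $\|\pi^{}_U . \omega\| = \|\omega\|$ on the right produces the normalized identity after the $\|\omega\|$ exponents cancel.

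Property~\ref{Rinf} is the most involved, and I would handle it by composing the ideas above. Applying $\Rbar{\A}{}$ to the product measure $\R{\B}{}(\omega)$ marginalizes each of the $|\A|$ factors (indexed by blocks $A_i$ of $\A$) against each of the $|\B|$ factors (indexed by blocks $B_j$ of $\B$). The surviving contributions are the nonempty intersections $A_i \cap B_j$, which are exactly the blocks of the common refinement $\A \wedge \B$, while every empty intersection contributes a unit factor after normalization. Collecting terms yields $\R{\A}{}\R{\B}{}(\omega) = \Rbar{\A \wedge \B}{}(\omega)/\|\omega\|^{|\A \wedge \B|} = \R{\A \wedge \B}{}(\omega)$. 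The main obstacle throughout is purely combinatorial bookkeeping: one must track the powers of $\|\omega\|$ introduced by empty block intersections and verify they cancel precisely against the outer normalization factors. Once the counting identities for the number of empty intersections are made explicit, each of the three identities drops out directly.
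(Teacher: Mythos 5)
Your argument is correct, but it is not the route the paper takes. The paper does not prove~\ref{Rinf} and~\ref{R_on_subsystem} at all: it imports them as Proposition~2 and Lemma~1 of \citet{BaakeBaakeSalamat} (noting they survive the change of normalisation), and then obtains~\ref{productR} in two lines by writing $\Rbar{\A}{S} = \Rbar{\tts\{U,V\}}{S}\Rbar{\A}{S}$ (an instance of~\ref{Rinf}, since $\A \< \{U,V\}$ gives $\{U,V\}\wedge\A = \A$), expanding $\Rbar{\tts\{U,V\}}{S}$ by definition into $(\pi^{}_U.\,\cdot\,)\otimes(\pi^{}_V.\,\cdot\,)$, and applying~\ref{R_on_subsystem} to each tensor factor. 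You instead prove all three identities from scratch, directly from the tensor-product definition~\eqref{recombinator_general}, using the single observation that marginalisation distributes over tensor factors with empty factors collapsing to $\|\omega\|$; in particular your proof of~\ref{Rinf} rests on the (correct) fact that the blocks of $\A\wedge\B$ are exactly the nonempty intersections $A_i\cap B_j$. What your approach buys is self-containedness and a uniform mechanism for all three statements; what it costs is the explicit bookkeeping of the powers of $\|\omega\|$ coming from empty intersections, which you rightly flag as the only real point of care (e.g.\ in~\ref{Rinf} one should note that $\R{\tts\B}{}(\omega)$ is a probability measure, so the outer normalisation is trivial and the count $|\A|\ts|\B| - |\A\wedge\B|$ of empty intersections makes the exponents match). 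The paper's approach makes~\ref{productR} essentially free once~\ref{Rinf} and~\ref{R_on_subsystem} are available, but leans on external references for those; your version would stand alone.
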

Here and in what follows, we may omit the argument when the meaning is
clear.

\begin{proof}[Proof of Fact~$\ref{properties_R}$]
Property~\ref{Rinf} is Proposition~2 and property~\ref{R_on_subsystem} is
Lemma~1 of \citet{BaakeBaakeSalamat} (they both remain true in our normalisation). Property~\ref{productR} is an obvious generalisation of Proposition~2 of \citet{WangenheimBaakeBaake}. It is easily seen 
by using first property~\ref{Rinf}, then~\eqref{recombinator_general}, then~\ref{R_on_subsystem} and finally~\eqref{recombinator_general} once more to give
\begin{align*}
\Rbar{\A}{S}(\omega) 
& = \Rbar{\tts\{U,V\}}{S} \big(\Rbar{\A}{S}(\omega)\big)
= \big (\big ( \pi^{}_{U} . \Rbar{\A}{S} \big ) \otimes \big ( \pi^{}_{V} . \Rbar{\A}{S} \big) \big ) (\omega) \\
& = \big  ( \Rbar{\A| ^{\pa}_U}{U} (\pi^{}_{U} . \omega) \big ) \otimes \big ( \Rbar{\A |^{\pa}_V}{V} (\pi^{}_{V} . \omega) \big ) = \big(\Rbar{\A|^{\pa}_U}{U} \otimes \, \Rbar{\A|^{\pa}_V}{V} \big) (\omega). \qedhere
\end{align*}
\end{proof}

Let us note a connection between recombination and sampling that will be important in what follows.

\begin{lemma} \label{product_H_R}
Let $S = U\, \dot \cup \, V$ for two nonempty subsets $U, V \subseteq S$. For two partitions $\A \in \PP(U)$, $\B \in \PP(V)$, the recombinator and the sampling operator satisfy
\[
\Rbar{\A}{U} \otimes \Hbar{\tts \B}{V} = \sumsubstack{\underdot \C\> \A \cup \B}{\C|^{\pa}_V = \B} \Hbar{\tts \C}{S}.
\]
\end{lemma}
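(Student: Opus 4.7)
The plan is to expand the sampling function on the right factor into recombinators via its definition \eqref{H_Summe_R}, collapse the resulting tensor product of two recombinators into a single recombinator on $S$ using Fact~\ref{properties_R}\ref{productR}, then re-expand that recombinator as a sum of sampling functions via Fact~\ref{R_Summe_H}, and finally swap the order of summation and apply the Möbius identity \eqref{help_moeb_Z} to collapse the inner sum.

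Concretely, I would first write
\[
\Rbar{\A}{U} \otimes \Hbar{\tts \B}{V}
 = \sum_{\underdot \D \> \B} \mu(\B,\D)\, \Rbar{\A}{U} \otimes \Rbar{\D}{V}.
\]
Since $\A \in \PP(U)$ and $\D \in \PP(V)$ with $S = U\, \dot\cup\, V$, the partition $\A \cup \D$ lies in $\PP(S)$ and refines $\{U,V\}$; hence Fact~\ref{properties_R}\ref{productR} gives $\Rbar{\A}{U} \otimes \Rbar{\D}{V} = \Rbar{\A \cup \D}{S}$. Applying Fact~\ref{R_Summe_H} to re-expand this as $\Rbar{\A \cup \D}{S} = \sum_{\C \succcurlyeq \A \cup \D} \Hbar{\tts\C}{S}$ yields
\[
\Rbar{\A}{U} \otimes \Hbar{\tts \B}{V}
 = \sum_{\underdot \D \> \B}\, \sum_{\underdot \C \> \A \cup \D} \mu(\B,\D)\, \Hbar{\tts\C}{S}.
\]

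Next I would swap the order of summation. The joint condition $\D \> \B$ and $\C \> \A \cup \D$ is equivalent, for fixed $\C$, to: $\A \preccurlyeq \C$, together with $\B \preccurlyeq \D$ and $\D \preccurlyeq \C$. The last condition, since $\D \in \PP(V)$, is the same as $\D \preccurlyeq \C|^{\pa}_V$. After interchange, the inner sum is $\sum_{\B \preccurlyeq \underdot{\D} \preccurlyeq \C|^{\pa}_V} \mu(\B,\D)$, which by \eqref{help_moeb_Z} equals $1$ if $\B = \C|^{\pa}_V$ and $0$ otherwise. This collapses the double sum to
\[
\Rbar{\A}{U} \otimes \Hbar{\tts \B}{V}
 = \sumsubstack{\underdot \C \> \A}{\C|^{\pa}_V = \B} \Hbar{\tts\C}{S}.
\]

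Finally, I would observe that the condition $\C|^{\pa}_V = \B$ already forces $\B \preccurlyeq \C$ (each block of $\C|^{\pa}_V$ is contained in the corresponding block of $\C$), so combined with $\A \preccurlyeq \C$ it is equivalent to $\A \cup \B \preccurlyeq \C$. This gives the index set in the statement. The main bookkeeping obstacle is precisely this equivalence between refinement of the pair $(\A,\B)$ on the disjoint pieces $U,V$ and refinement of $\A \cup \B$ in $\PP(S)$; everything else is a clean application of Möbius inversion.
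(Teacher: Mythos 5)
Your proposal is correct and follows essentially the same route as the paper's proof: expand $\Hbar{\tts \B}{V}$ via \eqref{H_Summe_R}, collapse the tensor product with Fact~\ref{properties_R}~\ref{productR}, re-expand via Fact~\ref{R_Summe_H}, and interchange sums so that \eqref{help_moeb_Z} forces $\C|^{\pa}_{V} = \B$. The only cosmetic difference is that you first write the index set as $\C \succcurlyeq \A$ with $\C|^{\pa}_V = \B$ and then observe its equivalence to $\C \succcurlyeq \A \cup \B$, whereas the paper states the latter form directly.
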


\begin{proof}
Using~\eqref{H_Summe_R} followed by Fact~\ref{properties_R}~\ref{productR} and 
Fact~\ref{R_Summe_H} we get
\[
\Rbar{\A}{U} \otimes \Hbar{\tts \B}{V} = \Rbar{\A}{U} \otimes \Big (\sum_{\underdot \D \> \B} \mu(\B, \D)\, \Rbar{\tts \D}{V} \Big ) = \sum_{\underdot \D \> \B} \mu(\B, \D)\, \Rbar{\tts \D\cup \A}{S} = \sum_{\underdot \D \> \B} \mu(\B, \D) \sum_{\underdot \Ee \> \D \cup \A} \Hbar{\tts \Ee}{S}.
\]
Changing the summation order and applying~\eqref{help_moeb_Z}
finally leads to
\[
\Rbar{\A}{U} \otimes \Hbar{\tts \B}{V} = \sum_{\underdot \C \> \A \cup \B} \Hbar{\tts \C}{S} \sum_{\B \< \underdot \D \< \C|^{\pa}_{V}} \mu(\B, \D) = 
\sumsubstack{\underdot \C \> \A \cup \B}{\C|^{\pa}_{V} = \B} \Hbar{\tts \C}{S}. \qedhere
\]
\end{proof}

\begin{remark}
In a perfectly analogous way, one can show
\[
\Hbar{\A}{U} \otimes \Hbar{\tts \B}{V} = \sumsubstack{\underdot \C\> \A \cup \B}{\C|^{\pa}_U = \A,\, \C|^{\pa}_V = \B} \Hbar{\tts \C}{S}.
\]
This illustrates once more that, unlike the $\Rbar{\A}{}$, the $\Hbar{\A}{}$ do \emph{not} have a product
structure; this  reflects the dependence inherent to drawing without replacement. 
\end{remark}


\paragraph{Correlations (or linkage disequilibria).}
Linkage disequilibria (LDE) are used in population genetics to quantify the deviation from independence
of allele frequencies at the various sites in a sequence. From three sites
onwards, many different notions of linkage disequilibria are available in the literature,
see \citet[Chap. V.4.2]{Buerger} for an overview. 

We will use as LDEs the general correlation functions, which are widely used in statistical physics, see \citet{Dyson} or \citet[Chap. 5.1.1]{Mehta}. This results in an explicit formula for multilocus LDEs for an arbitrary number of sites in terms of sums of products of marginal frequencies, see also \citet[Appendix]{BaakeBaake} or \citet{GorelickLaubichler}. As we will see, common definitions for two and three sites coincide with ours. \\
For any given subset $U \subseteq S$ and $\A \in \PP(U)$, we first define
\emph{correlation operators} as
\begin{equation}
\L{\A}{U} = \sum_{\underdot \B\< \A} \mu(\B,\A)\, \R{\tts \B}{U}. \label{Correlation_function}
\end{equation}
Note that the summation is now over all \emph{refinements} of $\A$, in contrast to our sampling functions, which involve all coarsenings of $\A$. The restriction to subsystems stems from the fact that one usually considers deviation from independence on  small subsets of $S$.

The $\L{\A}{U}$ have a product structure,
$
 \L{\A}{U} = \prod_{j=1}^{|\A|}\, \L{\tts \1}{A_j}\,,
$
which is obvious from~\eqref{Correlation_function} together with the product structure
of the recombinators (Fact~\ref{properties_R}~\ref{productR}) and that
of the Möbius function \eqref{moebius_function}.
Eq.~\eqref{Correlation_function} has the inverse
\begin{equation*}
\R{\A}{U} = \sum_{\underdot \B \< \A}  \L{\tts\B}{U} = \sum_{\underdot \B \< \A}\; \prod_{j=1}^{\lvert \B \rvert}\, \L{\tts\1}{B_j}\,
\end{equation*}
due to \emph{inversion from below} (see Section~\ref{sec:prelim}).
The latter can be reformulated as
\begin{equation}\label{recursive_LDE}
\L{\A}{U} = \R{\A}{U} - \sum_{\underdot \B\prec \A} \;
\prod_{j = 1}^{\lvert \B \rvert}\, \L{\,\1}{B_j}.
\end{equation}

The case $\A=\1|_U$, $U\subseteq S$, now is of special interest.
In line with population-genetics understanding, we define the \emph{multilocus linkage disequilibrium with respect to the sites in $U$} by letting 
$\L{\tts \1}{U}$ act on the marginal measure $\pi^{}_U . \omega$, $\omega \in \mathcal M_+(\XX) \setminus 0$:
\[
\L{\tts \1}{U}(\pi^{}_U .\omega)=\sum_{\A\in \PP(U)}  \mu(\A,\1 |^{\pa}_{U})\, \R{\A}{U} (\pi^{}_U .\omega), 
\]
cf.~\eqref{Correlation_function}.
Note that $\L{\1}{U}(\pi^{}_U . \omega)$ is again a measure on $\pi^{}_U(\XX)$, but no longer positive in general.
With the help of~\eqref{recursive_LDE}, it can be reformulated as
\[
\L{\tts \1}{U} (\pi^{}_U . \omega) = \R{\tts \1}{U} (\pi^{}_U . \omega)
-\sum_{\underdot \B \prec \1|_U} \; \prod_{j = 1}^{\lvert \B \rvert}\, \L{\tts \1}{B_j} (\pi^{}_{B_j} . \omega),
\]
which is Eq.~(1) in \citet{GorelickLaubichler}. Likewise, this alternative formulation of multilocus LDEs agrees with previous ones from \citet{Geiringer}, \citet{Bennett},
and \citet{Hastings} up to $|U| \leqslant 3$.

\begin{example} 
 For $S = \{1,2,3,4\}$ the LDE with respect to the sites in $U = \{2,4\}$ reads
\begin{align*}
\L{\tts \1}{U} (\pi_{\{2,4\}}^{} . \omega)(x)
& = \R{\tts \1}{U} (\pi_{\{2,4\}}^{} . \omega)(x) - \R{\tts \{\{2\},\{4\}\}}{U} (\pi_{\{2,4\}}^{} . \omega) (x) \\[1mm]
&  = \frac{1}{\| \omega \|}\, \omega(\ast,x_2,\ast,x_4) - \frac{1}{\| \omega \|^2}\, \omega(\ast,x_2,\ast,\ast)\, \omega(\ast,\ast,\ast,x_4).
\end{align*}
Similarly for $U=\{1,3,4\}$ we get
\begin{align*}
\L{\tts \1}{U} (\pi_{\{1,3,4\}}^{} . \omega)(x) & =  \frac{1}{\| \omega \|}\, \omega(x_1,\ast,x_3,x_4) -\frac{1}{\| \omega \|^2}\, \omega(x_1,\ast,\ast,\ast)\, \omega(\ast,\ast,x_3,x_4)\\ 
& \quad -\frac{1}{\| \omega \|^2}\,\omega(x_1,\ast,x_3,\ast)\, \omega(\ast,\ast,\ast,x_4) -\frac{1}{\| \omega \|^2}\, \omega(x_1,\ast,\ast,x_4)\, \omega(\ast,\ast,x_3,\ast)\\
& \quad + 2\,\frac{1}{\| \omega \|^3}\, \omega(x_1,\ast,\ast,\ast)\, \omega(\ast,\ast,x_3,\ast)\, \omega(\ast,\ast,\ast,x_4). 
\end{align*}
\end{example}

The correlation operators can also be expressed in terms of our sampling operators. Eqns.~\eqref{Correlation_function} and~\eqref{RAHB}, together with
a change of the summation order, lead to
\begin{equation} \label{L_in_H}
\L{\A}{U}=\sum_{\underdot \B \< \A} \mu(\B,\A) \sum_{\underdot \C \> \B} \frac{N!}{(N - \lvert \C \rvert)!\,\Ncard{\B}}\, \H{\tts \C}{U} = \sum_{\C\in \PP(U)} \H{\tts \C}{U} \sum_{\underdot \B\<\A \wedge \C} \frac{N!}{(N - \lvert \C \rvert)!\,\Ncard{\B}}\, \mu(\B,\A).
\end{equation}
For a counting measure $z \in E$ and $U \subseteq S$ with $|U| = k  \leqslant 3\leqslant N$, Eq.~\eqref{L_in_H} yields a particularly nice explicit expression for the LDEs:
\begin{equation}\label{LDEexplicit}
\L{\tts \1}{U} (\pi^{}_U . z) = \frac{N!}{N^k (N-k)!} \sum_{\A \in \PP(U)} \mu(\A,\1 |^{\pa}_{U})
\, \H{\A}{U} (\pi^{}_U . z),
\end{equation}
as is easily verified.
For larger $k$, the explicit formula gets more involved.

Let us now consider $\L{\A}{U}$ for $\A \in \PP(U) \setminus \! \1 |^{\pa}_{U}$. Due to
its product structure, the collection of all $\L{\tts \1}{V}(\pi^{}_V . \omega)$,
$V \subseteq U$, determines all $\L{\A}{U}(\pi^{}_U . \omega)$, $\A \in \PP(U)$.
This is why, for a deterministic $\omega$, the $\L{\A}{U}(\pi^{}_U . \omega)$, $\A \neq \1 |^{\pa}_{U}$,
are of no particular interest of their own.
This changes, however, when
$\omega$ is  random  (like $Z_t$). For we typically do
not know  the law of $Z_t$ completely; rather, we have access to
the expectation of  certain functions of $Z_t$.
More precisely, let $\varphi$ be the law of $Z_t$
and $\E_\varphi$ denote the expectation with respect to  $\varphi$
(that is, for a function $f$ of $Z_t$,
$\E_\varphi[f] = \int f(z) \, {\rm d} \varphi(z)$). It is important to
note that  the
product structure of the recombined measure does not carry over to
the expectation. That is, for $\A \in \PP(U)$,
$\E_\varphi [\R{\A}{U} (\pi_U^{} . Z_t)] \neq \R{\A}{U} (\E_\varphi[\pi_U^{} . Z_t])$ in general,
see the discussion in \citet{BaakeHerms}; 
this is indeed a subtle point that  sometimes goes wrong, as in
\citet{Polen4}, Eq.~(12), or \citet{Polen}, pp.~471/472.
As a consequence, one also has
$\E_\varphi[\L{\A}{U}(\pi^{}_U . Z_t)]
\neq \prod_{i=1}^{|\A|} \L{\tts \1}{A_i} (\E_\varphi[ \pi^{}_{A_i} . Z_t])$ in general.
In the stochastic case, therefore, it is interesting to consider
the $\L{\A}{U}$ for $\A \neq \1 |^{\pa}_{U}$ as well.
The expectations $\E_\varphi [\L{\A}{U}(\pi^{}_U . Z_t)]$
contain information on how the mean LDEs in one part
of the sequence depend on the mean LDEs in other parts of the
sequence. In the next section,
we will obtain an ODE system for the $\E_\varphi[ \H{\A}{S} (Z_t)]$, $\A \in \PP(S)$,
and these translate into $\E_\varphi[\R{\A}{S} (Z_t)]$ and thus into
$\E_\varphi[\L{\A}{S}(Z_t)]$ via~\eqref{L_in_H}. Marginalisation can then
be used to calculate the corresponding quantities on $U \subset S$,
such as  $\E_\varphi  [\L{\A}{U}(\pi^{}_U . Z_t)]$ for $\A \in \PP(U)$. 
\section{Duality}\label{sec:duality}
Duality is a powerful tool to obtain information about one process by studying 
another, the dual process. The latter may, in an optimal case, have a much smaller state space than the original one. Duality results are essential in interacting particle systems in physics and in population genetics. They are often related to time reversal. The most famous example in population genetics is arguably the moment duality between the Wright-Fisher diffusion forward in time and the block counting process of Kingman's coalescent backward in time \citep{Donnelly, Moehle}. \citet{Mano} extended this result by incorporating recombination into the two-locus, two-allele case. His results are based on the original version of the ARG and thus on the diffusion limit.

We will briefly explain the general duality concept and then prove that 
our processes $\{Z_t\}_{t \geqslant 0}$ and $\{\varSigma_t\}_{t \geqslant 0}$
are duals of each other.
For the general principle, let $X=\{X_t\}_{t\geqslant 0}$ and $Y=\{Y_t\}_{t\geqslant 0}$ be two Markov processes with state spaces $E$ and $F$. Define by $M(E\times F)_b$ the set of all bounded measurable functions on $E\times F$. The following definition of duality with respect to a function goes back to \citet{Liggett}; see also the recent review by \citet{KurtJansen}.

\begin{definition}[Duality]
The Markov processes $X$ and $Y$, with laws $\varphi$ and $\psi$, 
respectively, are said to be \textit{dual} with respect to a function
$H\in M (E\times F)_b$ if, for all $x\in E$, $y\in F$ and $t\geqslant 0$,
\begin{equation}\label{duality}
\E_\varphi\left[H(X_t,y) \mid X_0=x\right]=\E_\psi\left[H(x,Y_t) \mid Y_0=y\right]. 
\end{equation}
\end{definition}

If $E$ and $F$ are  finite, every function $H\in M(E\times F)_b$ may be represented by a  matrix with bounded entries $H(v,w)$, $v \in E$, $w \in F$. If, further, $X$ and $Y$ are  time-homogeneous with generator matrices $\varLambda$ and $\varTheta$ respectively, the expectations in \eqref{duality}
may be written in terms of the corresponding semigroups, i.e.,
\begin{equation}
\begin{split}\label{semigroup}
\E_\varphi \left[H(X_t,y) \mid X_0 = x \right] & = \sum_{v \in E} ({\rm e}^{t \varLambda})_{xv} H(v,y),\\ 
\E_\psi\left[H(x,Y_t) \mid Y_0 = y \right] & = \sum_{w \in F} ({\rm e}^{t \varTheta})_{yw} H(x,w). 
\end{split}
\end{equation}
Since the duality equation~\eqref{duality} is automatically satisfied at $t=0$,
it is  sufficient to check the
identity of the derivatives at $t=0$. That is, 
Eq.~\eqref{duality}  holds for all times if and only if
\begin{equation}\label{finite_duality}
\begin{split}
 \frac{{\rm d}}{{\rm d}t}\, \E_\varphi \left[H(X_t,y) \mid X_0 = x \right ] { \big |_{t=0}}  & = 
 \sum_{v\in E} \varLambda_{xv}\, H(v,y) \\
 & =  
  \sum_{w\in F} H(x,w)\,\varTheta_{yw}
= \frac{{\rm d}}{{\rm d}t}\ \E_\psi \left[ H(x,Y_t) \mid Y_0 = y \right ] { \big |_{t=0}}
\end{split}
\end{equation}
for all $x\in E,\,y\in F$. As a short-hand of \eqref{finite_duality}, one can write $\varLambda\, H=H \varTheta^T$, where $T$ denotes transpose.

We will now present a duality result that  justifies our construction 
of a marginalised sample at present via the partitioning process and
sampling from the initial population (cf. Figure~\ref{backward_process}). 
Indeed, it is not coincidence that we have denoted our sampling functions by $\H{\A}{}$ and our 
generators by $\varLambda$ and $\varTheta$.

\begin{theorem} \label{equality_generators}
The population process $\{Z_t\}_{t\geqslant 0}$ and the partitioning process $\{\varSigma_t\}_{t\geqslant 0}$
with the generators $\varLambda$ and $\varTheta$ and resulting laws
$\varphi$ and  $\psi$, respectively,
are dual with respect to the sampling function $H$ defined in \eqref{H_normalised}. Explicitly,
\begin{equation}\label{duality_Z_Sigma}
 \E_{\varphi} \big[ \H{\A}{} (Z_t) \mid  Z_0 = z \big ] = 
 \E_{\psi} \big[ \H{\varSigma_t}{}(z) \mid \varSigma_0=\A \big]
\end{equation}
for all $\A \in \PP(S)$ and $z \in E$.
\end{theorem}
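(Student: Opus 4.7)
Since~\eqref{duality_Z_Sigma} is trivial at $t=0$, the generator criterion~\eqref{finite_duality} reduces the claim to the pointwise identity
\[
 (\varLambda\,\H{\A}{})(z)\;=\;\sum_{\B\in\PP(S)}\varThet{\A\B}{}\,\H{\B}{}(z),
\]
for every $x\in\XX$, $z\in E$ and $\A\in\PP(S)$. I would work throughout with the unnormalised $\Hbar{\A}{}$; the conversion back via~\eqref{H_normalised} absorbs the differing $|\A|$- and $|\B|$-dependent normalisations into the falling factorial $(N-(m-1))!/(N-|\B|)!$ that appears in~\eqref{eq:theta}, with $m=|\A|$.

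I would compute the left-hand side by writing
\[
 (\varLambda\,\Hbar{\A}{})(z)\;=\;\sum_{x',y'\in\XX}\lambda(z;y',x')\,\bigl[\Hbar{\A}{}(z+\delta_{x'}-\delta_{y'})-\Hbar{\A}{}(z)\bigr]
\]
and exploiting the combinatorial interpretation of $\Hbar{\A}{}(z)$ from Proposition~\ref{prop:sampling_without} as counting ordered without-replacement $m$-tuples of individuals whose block-wise letters match the target. The only tuples whose count changes under a single birth-death event are those in which exactly one slot, say slot $j$ with block $A_j$, is occupied by the dying $y'$ or the newborn $x'$. Decomposing $\lambda$ by the recombination partition $\C\in\OP_{\leqslant 2}(S)$ as in~\eqref{lambda} and using Fact~\ref{properties_R}\ref{productR} to factor $\Rbar{\C}{}(z)$ over its parental blocks, each surviving contribution becomes a marginal recombinator on $A_j$ (with at most two parental factors encoding $\J:=\C|^{\pa}_{A_j}$) tensored with a sampling factor $\Hbar{}{}$ on $A_{M\setminus j}$ drawn without replacement from the $N-1$ individuals not involved in the event.

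Lemma~\ref{product_H_R} then rewrites each such mixed product as a sum of $\Hbar{\B}{}(z)$ over partitions $\B\succcurlyeq\A_{M\setminus j}\cup\J$ with $\B|^{\pa}_{A_{M\setminus j}}=\A_{M\setminus j}$, which is exactly the support of $\vartheta_{j,\J;\A,\B}$ in~\eqref{eq:theta}; the outer sum over $\C$ with $\C|^{\pa}_{A_j}=\J$ collapses to $r^{A_j}_{\J}$ by~\eqref{def-marginal-rates}. The newborn-in-sample contributions will reproduce case~\ref{split_into_two} combined with~\ref{coal_none_of_two}--\ref{coal_two_of_two}, while the pure-resampling $\C=\1$ contributions give~\ref{split_into_one} combined with~\ref{coal_none_of_one}--\ref{coal_one_of_one}; the dying-in-sample terms, by an application of the M\"obius identity~\eqref{help_moeb_Z}, cancel against the silent terms in the newborn-in-sample sum to furnish the diagonal entry $\varThet{\A\A}{}\Hbar{\A}{}(z)$.

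The main obstacle, as with every finite-population moment duality, is the bookkeeping of the combinatorial prefactors. The factor $1/N^{|\J|}$ in~\eqref{Theta} has to emerge from the recombinator normalisation $\|z\|^{|\C|}=N^{|\C|}$ combined with the $1/N$ from the uniform choice of the dying individual, while the falling factorial $(N-(m-1))!/(N-|\B|)!$ encodes the translation of a without-replacement sample of $m-1$ surviving individuals out of $N-1$ into a sample of $|\B|$ individuals out of the original $N$. Once this accounting is in place the generator identity holds term by term, and~\eqref{semigroup} yields~\eqref{duality_Z_Sigma}.
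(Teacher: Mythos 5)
Your plan is correct and follows essentially the same route as the paper: both reduce the claim to the generator identity $\varLambda H = H\varTheta^{T}$, decompose $\lambda$ over the recombination partitions, exploit the product structure of $\Rbar{\A}{}$ and the M\"obius relation between $\Hbar{\A}{}$ and $\Rbar{\A}{}$, invoke Lemma~\ref{product_H_R} to match the mixed terms $\Hbar{\A_{M\setminus j}}{}\otimes\R{\J}{}$ with the support of $\vartheta_{j,\J;\A,\B}$, and account for the falling-factorial normalisations (the paper's identity~\eqref{weight_count}). The only difference is presentational: where you narrate the effect of a single birth--death event combinatorially on the without-replacement tuple counts, the paper packages the same computation into the algebraic identities of Lemma~\ref{sumysumx}.
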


Before we embark on the proof, let us briefly comment on the meaning of
this result. 
\begin{remark} 
Eq.~\eqref{duality_Z_Sigma} is the formal equivalent of the construction
in Figure~\ref{backward_process}. To see this, recall the
random variables $\widetilde X_{t,\A}$ from~\eqref{sampling_without}.
With their help, the left-hand side
of~\eqref{duality_Z_Sigma} may be reformulated as a probability
distribution,
\[
\E_{\varphi} \big [ \H{\A}{} (Z_t) \mid  Z_0 = z \big ] = 
\E_{\varphi}  \big [\P \big [ \widetilde X_{t,\A} = \cdot \big] \mid  Z_t,  Z_0 = z  \big ]
= \P_\varphi \big [\widetilde X_{t,\A} = \cdot \mid   Z_0 = z  \big ],
\] 
since 
the expectation  is over all realisations of 
$Z_t$. The  right-hand side is the probability distribution considered
by \citet{Polen}. Likewise, the right-hand side of~\eqref{duality_Z_Sigma} is equal to
\[
\E_{\psi} \big [ \H{\varSigma_t}{} (z) \mid  \varSigma_0 = \A \big ] = 
\E_{\psi} \big [\P \big [ \widetilde X_{0,\varSigma_t} = \cdot \big ] \mid \varSigma_t,  \varSigma_0 = \A  \big ]
= \P_\psi \big [ \widetilde X_{0,\varSigma_t} = \cdot \mid   \varSigma_0=\A \big ],
\]
since the expectation is over all realisations of
$\varSigma_t$. The right-hand side is the distribution of types when
sampling from the initial population according to the partition $\varSigma_t$,
where it is understood that the initial population consists of
the types $X_0^1, \dotsc, X_0^N$ with $\sum_{k=1}^N \delta_{X_0^k}=z$.
Recall that time runs forward in  $Z_t$, $X_t^k$, and $\widetilde X_{t,\A}$,
but backward in $\varSigma_t$.
\end{remark}

In order to avoid case distinctions in the calculations in the 
remainder of this section, let us agree on the following conventions 
concerning (partitions of) empty sets. Namely, we  set
$\A_\varnothing := \varnothing$, 
$\Hbar{\tts \varnothing}{} (\pi^{}_\varnothing . z) 
=\Rbar{\tts \varnothing}{} (\pi^{}_\varnothing . z) := \pi^{}_\varnothing . z 
= \| z \| = N$, and $\mu(\varnothing, \varnothing) := 1$.
We now collect some auxiliary results in the
following Lemma.

\begin{lemma}\label{sumysumx}
Consider a counting measure $z \in E$, a partition $\A \in \PP(S)$  with $|\A| = m \leqslant N$ and corresponding index set 
$M = \{1, \dotsc, m\}$,  and a partition $\B \in \PP(S)$.
Then, the following statements hold:
\begin{enumerate}[label=(\Alph*)]
\item \label{sumy_H_z_plus_deltax}
 $\displaystyle{\sum_{x \in \XX}} \big( \R{\tts \B}{} (z)\big)(x) \,
\big[ \Hbar{\A}{} (z + \delta_x) - \Hbar{\A}{} (z) \big]
 =  \displaystyle{\sum_{j \in M}} 
\Big ( \Hbar{\A_{M \setminus j}}{} \otimes \R{\tts \B|^{\pa}_{A_j}}{} \Big )(z).$  
\item \label{sumy_H_z_minus_deltay}
${\displaystyle \sum_{x \in \XX} z(x)}  \,
\big [ \Hbar{\A}{} (z - \delta_x) - \Hbar{\A}{} (z) \big ]  
 =  - m \, \Hbar{\A}{}(z)$.
\end{enumerate}
\end{lemma}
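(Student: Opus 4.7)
My plan is to leverage the combinatorial interpretation of $\Hbar{\A}{}(z)$ supplied by Proposition~\ref{prop:sampling_without}: with the $N$ individuals in $z$ labelled $1,\dotsc,N$ and carrying types $X^1,\dotsc,X^N$, the value $(\Hbar{\A}{}(z))(x')$ is the number of $m$-tuples $\vec\ell=(\ell_1,\dotsc,\ell_m)$ of pairwise distinct labels in $\{1,\dotsc,N\}$ with $X^{\ell_j}_{A_j}=x'_{A_j}$ for every $j\in M$. Both identities then follow from elementary double counting, combined with the marginalisation property Fact~\ref{properties_R}~\ref{R_on_subsystem} in the case of~\ref{sumy_H_z_plus_deltax}.

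For part~\ref{sumy_H_z_plus_deltax}, the first step is the pointwise identity
\[
 (\Hbar{\A}{}(z+\delta_x))(x')-(\Hbar{\A}{}(z))(x')=\sum_{j\in M}\mathbf{1}\bigl[x_{A_j}=x'_{A_j}\bigr]\,(\Hbar{\A_{M\setminus j}}{}(z))(x'_{A_{M\setminus j}}),
\]
obtained by splitting the $m$-tuples valid for $z+\delta_x$ according to whether or not they use the freshly inserted label $N+1$; since $\vec\ell$ has pairwise distinct entries, the new label can occupy at most one coordinate, and fixing that coordinate to $j$ forces $x_{A_j}=x'_{A_j}$ and leaves a valid $(m-1)$-sample from $z$ for the reduced partition $\A_{M\setminus j}\in\PP(A_{M\setminus j})$. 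Multiplying by $(\R{\B}{}(z))(x)$, summing over $x\in\XX$, and rewriting $\sum_{x}(\R{\B}{}(z))(x)\,\mathbf{1}[x_{A_j}=x'_{A_j}]=(\pi^{}_{A_j}.\R{\B}{S}(z))(x'_{A_j})=(\R{\B|^{\pa}_{A_j}}{A_j}(\pi_{A_j}^{}.z))(x'_{A_j})$ by Fact~\ref{properties_R}~\ref{R_on_subsystem} matches the right-hand side summand by summand, once the tensor product $\Hbar{\A_{M\setminus j}}{}\otimes\R{\B|^{\pa}_{A_j}}{}$ is read in the sense of Fact~\ref{properties_R}~\ref{productR}.

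For part~\ref{sumy_H_z_minus_deltay}, a more direct double-counting argument suffices. Observe that $z(y)\,(\Hbar{\A}{}(z-\delta_y))(x')$ counts pairs $(\ell_0,\vec\ell)$ in which $\ell_0$ labels an individual of type $y$ and $\vec\ell$ is a valid $m$-sample for $\A$ drawn from the remaining $N-1$ labels. Summing over $y\in\XX$ enumerates all pairs with $\ell_0\in\{1,\dotsc,N\}\setminus\vec\ell$ and $\vec\ell$ valid, which yields $(N-m)\,(\Hbar{\A}{}(z))(x')$; since $\sum_y z(y)=\|z\|=N$ gives $\sum_y z(y)\,(\Hbar{\A}{}(z))(x')=N\,(\Hbar{\A}{}(z))(x')$, subtracting produces the claimed $-m\,\Hbar{\A}{}(z)$. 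In spirit this is a discrete Euler identity, reflecting that $\Hbar{\A}{}$ is a falling-factorial polynomial of degree $m$ in $z$.

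I expect the genuine obstacle to be bookkeeping rather than substance: one must consistently interpret $\Hbar{\A_{M\setminus j}}{}(z)$ as a measure on the subsystem $\XX_{A_{M\setminus j}}$ so that its tensor product with the $\XX_{A_j}$-measure lives on $\XX$, deploy Fact~\ref{properties_R}~\ref{R_on_subsystem} in the correct direction, and invoke the empty-set conventions introduced just before the lemma to absorb the $j$-term with $M\setminus j=\varnothing$ arising when $m=1$.
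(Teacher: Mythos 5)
Your proof is correct, but it takes a genuinely different route from the paper's. The paper argues algebraically: it first establishes both identities for the product measures $\Rbar{\A}{}$ by a multilinear expansion of $\Rbar{\A}{}(z\pm\delta_\cdot)$ over subsets $J\subseteq M$, converts the result into sampling functions via Fact~\ref{R_Summe_H}, and then transfers everything to $\Hbar{\A}{}$ by M\"obius inversion, using the telescoping identity~\eqref{help_moeb_Z} to collapse the resulting double sums to the single term $J=\{j\}$ (resp.\ to $-m\,\Hbar{\A}{}$). You instead work directly with the combinatorial meaning of $\Hbar{\A}{}$ from Proposition~\ref{prop:sampling_without} and double-count labelled tuples; your one-individual-insertion identity in part~\ref{sumy_H_z_plus_deltax} and the $(N-m)$-versus-$N$ count in part~\ref{sumy_H_z_minus_deltay} are both valid, and the passage to the right-hand side of~\ref{sumy_H_z_plus_deltax} via $\sum_x(\R{\tts \B}{}(z))(x)\,\mathbf{1}[x^{}_{A_j}=x'_{A_j}]=(\R{\tts \B|^{\pa}_{A_j}}{}(\pi^{}_{A_j}.z))(x'_{A_j})$ is exactly Fact~\ref{properties_R}~\ref{R_on_subsystem}. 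What your route buys is brevity and probabilistic transparency (part~\ref{sumy_H_z_minus_deltay} really is the Euler identity for a falling-factorial polynomial in $z$); what it costs is two small justifications you should make explicit. First, Proposition~\ref{prop:sampling_without} is stated for $Z_t\in E$, whereas you apply the counting interpretation to $z+\delta_x$ and $z-\delta_y$, which have total mass $N\pm1$; this is harmless, since the proof of that proposition uses only that the argument is a counting measure, but it needs saying. Second, for $m=1$ your combinatorics forces the empty tensor factor $\Hbar{\tts \varnothing}{}$ to equal $1$ (there is exactly one empty tuple), not the value $N$ assigned by the convention stated just before the lemma; with the value $1$, identity~\ref{sumy_H_z_plus_deltax} for $\A=\1$ correctly returns $\R{\tts \B}{}(z)$ on both sides (as a direct computation of the left-hand side confirms), so you should rely on your own count there rather than invoking that convention literally.
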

%
Before we prove the lemma, let us give some explanations.

\begin{remark}
Note first that, with the above conventions, 
the right-hand side of identity~\ref{sumy_H_z_plus_deltax} evaluates to
\[
\sum_{j \in M} 
\Big ( \Hbar{\A_{M \setminus j}}{} \otimes \R{\tts \B|^{\pa}_{A_j}}{} \Big )(z)
= N \R{\tts \B}{} (z) \quad \text{if } \A = \1. 
\]
Let us now provide an interpretation for the statements of the lemma.
Evaluating statement~\ref{sumy_H_z_plus_deltax} for a given
type $y\in\XX$ yields the equivalent formulation
\[
\Big( {\displaystyle \sum_{x \in \XX}}
\big( \R{\tts \B}{} (z)\big)(x) \,
 \Hbar{\A}{} (z + \delta_x)\Big) (y) = \big( \Hbar{\A}{} (z) \big) (y)
 + {\displaystyle \sum_{j \in M}} 
\Big( \Big ( \Hbar{\A_{M \setminus j}}{} \otimes \R{\tts \B|^{\pa}_{A_j}}{} \Big )(z)\Big) (y).
\] 
Let us read the left-hand side as the expected number of $y$ individuals when drawing the parts of $\A$ without replacement from the population $z$ to which one individual with type  distribution  $\R{\tts \B}{} (z)$ has beed added. The statement then says that this can be achieved either by drawing \emph{all} parts of $\A$ from $z$ without replacement, \emph{or} by drawing \emph{all but one} of them from $z$ without replacement and the parts of $\B$ induced by the remaining block independently of each other and of all other blocks. \\
Likewise, evaluating statement \ref{sumy_H_z_minus_deltay} for some type $y\in \XX$ gives 
\[ \Big(\sum_{x \in \XX} \, \frac{z(x)}{N}  \Hbar{\A}{} (z - \delta_x) \Big) (y)  
 =  \frac{N - m}{N} \,  \big(\Hbar{\A}{}(z)\big) (y). \]
Let us note in passing that the left-hand side is always well-defined, since $z-\delta_x<0$ can only occur with $z(x) = 0$, in which case the term vanishes. 
This left-hand side yields the expected number of $y$ individuals when drawing the parts of $\A$ from the population $z$ \emph{after} removal of one randomly sampled individual. The statement then tells us that this is the same as \emph{first} drawing the parts of $\A$ from \emph{all} of $z$  and then deciding whether none of the $m$ affected individuals has been removed, which is the case with probability $(N - m)/N$.
\end{remark}

\begin{proof}[Proof of Lemma~$\ref{sumysumx}$]
We first observe that 
\begin{equation}\label{sum_proj}
\sum_{x \in \XX} \big  (\R{\tts \B}{} (z) \big ) (x) \ts (\pi^{}_{U} . \delta_x) 
= \pi^{}_{U} . \sum_{x \in \XX} \big (\R{\tts \B}{} (z) \big ) (x) \, \delta_x 
= \pi^{}_{U} . \big (\R{\tts \B}{} (z) \big ) 
= \R{\tts \B |^{\pa}_U}{} ( \pi^{}_U . z)
\end{equation}
by Fact~\ref{properties_R}.
We next evaluate $\sum_{x \in \XX} \big(\R{\tts \B}{} (z) \big) (x) 
\,\big [ \Rbar{\A}{} (z + \delta_x) - \Rbar{\A}{} (z) \big ]$ by
expanding $\Rbar{\A}{}$ to separate the action on $z$ from 
that on $\delta_x$,  summing against $\R{\tts \B}{} (z)$ (using~\eqref{sum_proj}),
applying Fact~\ref{R_Summe_H} and changing summation:
\begin{align*}
&  \sum_{x \in \XX}  \big ( \R{\tts \B}{} (z) \big ) (x) \,
\big [ \Rbar{\A}{} (z + \delta_x) - \Rbar{\A}{} (z) \big ] \\
& \quad\quad\quad
 = \sum_{x \in \XX}  \big (\R{\tts \B}{} (z) \big ) (x) \sum_{\varnothing \neq \underdot J \subseteq M} 
\Big ( \Rbar{\A_{M \setminus J}}{} (\pi^{}_{A_{M \setminus J}} . z) \Big ) \otimes \big( \pi^{}_{A_J} .\delta_x) \\
& \quad\quad\quad = \sum_{\varnothing \neq \underdot J \subseteq M} 
\Big( \Rbar{\A_{M \setminus J}}{} \otimes \R{\tts \B |^{\pa}_{A_J}}{} \Big) (z) = \sum_{\varnothing \neq \underdot J \subseteq M} \,
\sum_{\underdot \C \succcurlyeq \A^{}_{M \setminus J}} 
\Big (\Hbar{\tts \C}{} \otimes \R{\tts \B |^{\pa}_{A_J}}{} \Big )(z) \\
& \quad\quad\quad = \sum_{\underdot \D \succcurlyeq \A}\; \sum_{j=1}^{\lvert \D \rvert}
\Big (\Hbar{\tts \D \setminus D_j}{} \otimes \R{\tts \B |^{\pa}_{D_j}}{} \Big )(z),
\end{align*}
where, in the last step, every $A_J$ reappears as one $D_j$.
Using this together with~\eqref{H_Summe_R} and~\eqref{help_moeb_Z}, we obtain
\begin{align*}
 & \sum_{x \in \XX}  \big ( \R{\tts \B}{} (z) \big ) (x) 
\left [ \Hbar{\A}{} (z + \delta_x) - \Hbar{\A}{} (z) \right ]
  =  \sum_{\underdot \C \succcurlyeq \A}  
\mu(\A,\C) \sum_{x \in \XX}  \big ( \R{\tts \B}{} (z) \big ) (x) \left [ \Rbar{\tts \C}{} (z + \delta_x) - \Rbar{\tts \C}{} (z) \right ] \\
& \quad\quad = \sum_{\underdot \C \succcurlyeq \A}\mu(\A,\C)  
\sum_{\underdot \D \succcurlyeq \C}\; \sum_{j=1}^{\lvert \D \rvert}
\Big ( \Hbar{\tts \D \setminus D_j}{} \otimes \R{\tts \B |^{\pa}_{D_j}}{} \Big)(z) \\
&  \quad\quad = \sum_{ \underdot \D \> \A}\; \sum_{j=1}^{\lvert \D \rvert}
\Big (\Hbar{\tts \D \setminus D_j}{} \otimes \R{\tts \B |^{\pa}_{D_j}}{} \Big )(z)
 \sum_{\A \< \underdot \C \< \D }  \mu(\A,\C) 
 = \sum_{j \in M} \Big ( \Hbar{\A_{M \setminus j}}{} \otimes \R{\tts \B|^{\pa}_{A_j}}{} \Big )(z),
\end{align*}
which is statement~\ref{sumy_H_z_plus_deltax}.

In an analoguous way, we can prove statement~\ref{sumy_H_z_minus_deltay}: 
\begin{align*}
& \sum_{x \in \XX} z(x) 
\left [  \Rbar{\A}{} (z - \delta_x) - \Rbar{\A}{} (z) \right ]
  = \sum_{\varnothing \neq \underdot J \subseteq M} (-1)^{\lvert J \rvert} \ts \sum_{x \in \XX}  z(x) \,
\Big ( \Rbar{\A_{M \setminus J}}{} (\pi^{}_{A_{M \setminus J}} . z)\Big ) \otimes  
\Big ( \Rbar{\tts \1}{A_J}( \pi^{}_{A_J} .\delta_x) \Big ) \\
 & \quad\quad = \sum_{\varnothing \neq \underdot J \subseteq M} (-1)^{\lvert J \rvert} \,
 \Big (\Rbar{\A_{M \setminus J}}{} \otimes \, \Rbar{\tts \1}{A_J} \Big )(z) 
 = \sum_{\varnothing \neq \underdot J \subseteq M} (-1)^{\lvert J \rvert} \,
 \Big (\Rbar{\A_{M \setminus J} \cup A_J}{} \Big ) (z) \\
 & \quad\quad = \sum_{\varnothing \neq \underdot J \subseteq M} (-1)^{\lvert J \rvert} \,
 \sum_{\underdot \B \> \A_{M \setminus J} \cup A_J} \Hbar{\tts \B}{} (z) 
 = \sum_{\underdot \C \> \A} \Hbar{\tts \C}{} (z)\,
\sum_{j=1}^{\lvert \C \rvert}\; \sum_{ \varnothing \neq \underdot K \subseteq C_j} (-1)^{\lvert K \rvert} \\
& \quad\quad = \sum_{\underdot \C \> \A} \Hbar{\tts \C}{} (z)
\sum_{j=1}^{\lvert \C \rvert}\, \left [ (1 - 1)^{|C_j|} - 1 \right ]
= - \sum_{\underdot \C \> \A}\, \lvert \C \rvert \, \Hbar{\tts \C}{} (z),
\end{align*}
where, in the second-last step, every $A_J$ reappears as a $C_j$.
We therefore get
\begin{align*}
    \sum_{x \in \XX} z(x) 
& \left [ \Hbar{\A}{} (z - \delta_x) - \Hbar{\A}{} (z) \right ] 
  = \sum_{\underdot \B \succcurlyeq \A} \mu(\A, \B)\, \sum_{x \in \XX} 
z(x) \left [ \Rbar{\tts \B}{} (z - \delta_x) - \Rbar{\tts \B}{} (z) \right ] \\
&  \quad\quad = - \sum_{\underdot \B \succcurlyeq \A} \mu(\A, \B)\,
\sum_{\underdot \C \> \B}\, \lvert \C \rvert \, \Hbar{\tts \C}{} (z)  
 = - \sum_{\underdot \C \> \A}\, \lvert \C \rvert \, \Hbar{\tts \C}{} (z)
\sum_{\A \< \underdot \B \< \C} \mu(\A,\B) \\
& \quad\quad = - \lvert \A \rvert \, \Hbar{\A}{} (z),
\end{align*}
as claimed.
\end{proof}

We can now proceed as follows.  
\begin{proof}[Proof of Theorem~$\ref{equality_generators}$]
We start with the partitioning process. We first note that
\begin{equation}\label{weight_count}
\sumsubstack{\underdot \B \succcurlyeq \A_{M\setminus j} \cup \J}{\B|^{\pa}_{A_{M \setminus j}}=\A_{M \setminus j}}  
\frac{\big ( N - (m - 1) \big ) !}{(N - |\B|)!} = \Ncard{\J}
\end{equation}
for $j \in M$ and $|\J| \leqslant 2$. This is easily verified by direct
calculation; 
namely, for $\lvert \J \rvert =1$, the sum on the left-hand side equals
$ (N - (m - 1)) + (m - 1)=N$; for $\lvert \J \rvert =2$, it evaluates to 
\[
(N - (m - 1))\, (N - m) + (N - (m - 1)) \, (2 m - 1) + (m - 1)^2 = N^2.
\] 
We now use the formulation of the process via~\eqref{eq:theta} and~\eqref{Theta} in
the first step, normalisation and~\eqref{weight_count} in the second, Lemma~\ref{product_H_R}
in the third, and finally another normalisation step to calculate 
\begin{align}
  \sum_{\B \in \PP(S)} \varThet{\A \B}{} \, \H{\tts \B}{} (z)
& = \sum_{j \in M} \sum_{\J \in \OP_{\leqslant 2}(A_j)} 
 \frac{\r{\J}{}}{\Ncard{\J}} 
 \sumsubstack{\underdot \B \succcurlyeq \A_{M\setminus j} \cup \J}{\B|^{\pa}_{A_{M \setminus j}}=\A_{M \setminus j}}  
 \frac{\big ( N - (m - 1) \big ) !}{(N - \lvert \B \rvert)!} \, \big (\H{\tts \B}{} - \H{\A}{} \big )(z)\notag \\ 
& = \sum_{j \in M} \sum_{\J \in \OP_{\leqslant 2}(A_j)} 
   \frac{\r{\J}{}}{\Ncard{\J}} \,
 \Bigg ( \Bigg ( \sumsubstack{\underdot \B \succcurlyeq \A_{M\setminus j} \cup \J}{\B|^{\pa}_{A_{M \setminus j}}=\A_{M \setminus j}}  
 \frac{\big ( N - (m - 1) \big ) !}{N!} \; \Hbar{\tts \B}{} \Bigg ) - \Ncard{\J} \, \H{\A}{} \Bigg )(z) \notag \\ 
& = \sum_{j \in M} \sum_{\J \in \OP_{\leqslant 2}(A_j)} 
   \frac{\r{\J}{}}{\Ncard{\J}} \,
  \Bigg (  \frac{\big ( N - (m - 1) \big ) !}{N!} 
 \Big ( \Hbar{\A_{M \setminus j}}{} \otimes \, \Rbar{\J}{} \Big ) - \Ncard{\J} \, \H{\A}{} \Bigg )(z) \notag \\ 
& = \sum_{j \in M} \sum_{\J \in \OP_{\leqslant 2}(A_j) } 
   \r{\J}{}\, \Big ( \H{\A_{M \setminus j}}{} \otimes \, \R{\J}{} - \H{\A}{} \Big )(z). \label{ThetaH_last_line}
\end{align}

We now turn to the type distribution process.
Here we first evaluate, with Lemma~\ref{sumysumx}~\ref{sumy_H_z_minus_deltay}:
 \begin{align*}
 \sum_{y \in \XX} z(y) 
& \left [ \Hbar{\A}{} \big ( z + \delta_x - \delta_y \big ) 
- \Hbar{\A}{} (z) \right ] \\
&  = \sum_{y \in \XX} (z + \delta_x)(y) \, \Hbar{\A}{} \big ( (z + \delta_x) - \delta_y \big ) - \sum_{y \in \XX} (z + \delta_x)(y) \, \Hbar{\A}{} (z) \\
&  = \sum_{y \in \XX} (z + \delta_x)(y) 
\left [ \Hbar{\A}{} \big ( (z + \delta_x) - \delta_y \big ) - \Hbar{\A}{} (z + \delta_x)
+ \Hbar{\A}{} (z + \delta_x) - \Hbar{\A}{} (z) \right ] \\
&  = (N + 1 - m) \, \big [ \Hbar{\A}{} (z + \delta_x) - \Hbar{\A}{} (z) \big ] 
  - m \, \Hbar{\A}{}(z). 
 \end{align*}
From this, we obtain via summation against $\R{\tts \B}{} (z)$ and use of
Lemma~\ref{sumysumx}~\ref{sumy_H_z_plus_deltax} that
\begin{equation} 
\begin{split} 
& \sum_{x \in \XX} \sum_{y \in \XX} \big (\R{\tts \B}{} (z) \big ) (x) \, z(y) \,
\big [ \Hbar{\A}{} (z + \delta_x - \delta_y  ) - \Hbar{\A}{} (z) \big ] \\
& \quad\quad\quad  = (N + 1 - m)  \sum_{j \in M} 
\Big ( \Hbar{\A_{M \setminus j}}{} \otimes \R{\tts \B|^{\pa}_{A_j}}{} \Big )(z) 
 - m \, \Hbar{\A}{}(z). \label{sum_zy_delta_x_delta_y}
\end{split}
\end{equation}

We now have to examine $\sum_{z' \in E} \varLambda_{zz'} \H{\A}{}(z')$ for an arbitrary partition $\A$ of $S$. 
To this end, we use~\eqref{lambda} and normalisation, followed by~\eqref{sum_zy_delta_x_delta_y} and a change of summation involving~\eqref{def-marginal-rates} to calculate
\begin{align*}
 \sum_{z' \in E} \varLambda_{zz'} \, \H{\A}{}(z')
  & = \sum_{x,y \in \XX} \lambda(z;\, y,\, x) \left [ \H{\A}{}(z + \delta_x - \delta_y) - \H{\A}{}(z) \right ]  \\
  & = \frac{(N - m)!}{N!} \sum_{\B \in \OP_{\leqslant 2}(S)} \r{\,\B}{} \sum_{x,y \in \XX} \big (\R{\tts \B}{} (z)\big ) (x) \, z(y) \left [ \Hbar{\A}{} (z + \delta_x - \delta_y) - \Hbar{\A}{} (z) \right ]  \\
  & = \sum_{\B \in \OP_{\leqslant 2}(S)} \r{\,\B}{} \,\Big [ \Big ( \frac{(N - (m - 1))!}{N!} \sum_{j \in M} \Hbar{\A_{M \setminus j}}{} \otimes \, \R{\tts \B|^{\pa}_{A_j}}{} \Big ) - \frac{(N - m)!}{N!} \, m \, \Hbar{\A}{} \Big ](z) \\
  & =  \sum_{\B \in \OP_{\leqslant 2}(S)} \r{\,\B}{} \, \sum_{j \in M}  
  \Big ( \H{\A_{M \setminus j}}{} \otimes \, \R{\tts \B|^{\pa}_{A_j}}{} - \H{\A}{} \Big ) (z) \\
  & =  \sum_{j \in M} \, \sum_{\J \in \OP_{\leqslant 2}(A_j) } \, \sum_{\substack {\B \in \OP_{\leqslant 2} (S) \\ \B|^{\pa}_{A_j} = \J}} \r{\,\B}{} \,\Big ( \H{\A_{M \setminus j}}{} \otimes \, \R{\J}{} - \H{\A}{} \Big )(z)  \\
  & = \sum_{j \in M} \, \sum_{\J \in \OP_{\leqslant 2}(A_j) }  \r{\J}{} \, 
      \Big ( \H{\A_{M \setminus j}}{} \otimes \, \R{\J}{} - \H{\A}{} \Big )(z),
\end{align*}
which agrees with~\eqref{ThetaH_last_line} and proves the claim.
\end{proof}

We can now harvest some interesting consequences. First, 
Eq.~\eqref{ThetaH_last_line} 
contains a meaningful expression for the derivative:
\begin{coro} For $\A \in \PP(S)$, $z \in E$, and the population process $\{Z_t\}_{t\geqslant 0}$, we have
\[
\pushQED{\qed}
\frac{{\rm d}}{{\rm d}t}\, \E_\varphi \left[ \H{\A}{} (Z_t) \mid  Z_0 = z \right ]{ \big |_{t=0}} = \sum_{j \in M}\, \sum_{\J \in \OP_{\leqslant 2}(A_j)}  \r{\J}{\!A_j} 
 \Big ( \R{\J}{A_j} \otimes \H{\A_{M \setminus j}}{A_{M \setminus j}} 
  - \H{\A}{S} \Big ) (z) . \qedhere \popQED
\] 
\end{coro}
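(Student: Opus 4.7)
The plan is to recognise that the Corollary is, modulo the duality itself, already contained in the proof of Theorem~\ref{equality_generators}. By the forward Kolmogorov equation applied to the bounded function $z' \mapsto \H{\A}{}(z')$, we have
\[
\frac{{\rm d}}{{\rm d}t}\, \E_\varphi\left[\H{\A}{}(Z_t) \mid Z_0 = z\right]\bigg|_{t=0}
= \sum_{z' \in E} \varLambda_{zz'}\, \H{\A}{}(z').
\]
The second half of the proof of Theorem~\ref{equality_generators} evaluates exactly this sum, using~\eqref{lambda}, definition~\eqref{def-marginal-rates} of the marginal recombination probabilities, and both parts of Lemma~\ref{sumysumx}. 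Its final line is
\[
\sum_{j \in M}\, \sum_{\J \in \OP_{\leqslant 2}(A_j) }  \r{\J}{\!A_j}\, \Big( \H{\A_{M \setminus j}}{} \otimes \, \R{\J}{} - \H{\A}{} \Big)(z),
\]
with each tensor factor naturally living on its own subsystem by Fact~\ref{properties_R}~\ref{productR}; reading the subsystem superscripts off explicitly (so that $\R{\J}{}$ becomes $\R{\J}{A_j}$ and $\H{\A_{M\setminus j}}{}$ becomes $\H{\A_{M \setminus j}}{A_{M \setminus j}}$) yields the claimed expression verbatim.

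Alternatively, and more in keeping with the genealogical viewpoint, one may differentiate the dual side of~\eqref{duality_Z_Sigma} at $t=0$: the semigroup representation~\eqref{semigroup} gives $\frac{{\rm d}}{{\rm d}t}\E_\psi[\H{\varSigma_t}{}(z) \mid \varSigma_0 = \A]\big|_{t=0} = \sum_{\B \in \PP(S)} \varThet{\A \B}{}\, \H{\tts \B}{}(z)$, which is precisely the quantity computed in display~\eqref{ThetaH_last_line} and equals the same expression. Since Theorem~\ref{equality_generators} guarantees that the two derivatives coincide, no further argument is required. The only delicate point is the bookkeeping of subsystem superscripts on the tensor factors, handled uniformly by Fact~\ref{properties_R}~\ref{productR}; there is no genuine obstacle beyond this piece of notational accounting.
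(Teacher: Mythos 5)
Your proposal is correct and follows essentially the same route as the paper: the corollary is read off directly from the evaluation of $\sum_{z' \in E} \varLambda_{zz'}\, \H{\A}{}(z')$ in the proof of Theorem~\ref{equality_generators} (equivalently, from display~\eqref{ThetaH_last_line} via the duality), combined with the standard semigroup fact~\eqref{finite_duality} that this sum is the derivative at $t=0$. The only cosmetic slip is calling this the forward Kolmogorov equation when it is the generator acting on the test function (the backward equation / Dynkin formula), but the substance is exactly what the paper intends.
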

The right-hand side has a plausible explanation. Namely, when block $A_j$ splits
into $\J$, the other blocks in $\A$ retain their current type distribution
(namely, $\H{\A_{M \setminus j}}{} (\pi^{}_{A_{M \setminus j}} . z$)).
Independently of this, the parts of $\J$ pick their types from 
\emph{all} individuals (with replacement), including those individuals
that already carry other parts of $\A_{M \setminus j}$, which is expressed
by the tensor product with $\R{\J}{} (\pi^{}_{A_j} . z)$.

Next, via~\eqref{semigroup} together with the fact that 
$\H{\A}{}(z) = \E_\varphi\left[ \H{\A}{} (Z_t) \mid Z_t = z\right]$, 
Eqns.~\eqref{finite_duality}~and~\eqref{duality_Z_Sigma} give rise to a system of differential
equations for the expectations, namely:
\begin{coro}\label{ODE_exp} For $\A \in \PP(S)$ and the population process $\{Z_t\}_{t\geqslant 0}$, one has
\[
\pushQED{\qed}
 \frac{{\rm d}}{{\rm d}t}\, \E_\varphi \left[ \H{\A}{} (Z_t) \right ] = \sum_{\B\in \PP(S)} \varThet{\A \B}{}\; \E_\varphi\left[\H{\B}{} (Z_t ) \right]. \qedhere \popQED
\]
\end{coro}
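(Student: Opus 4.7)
The plan is to derive the ODE by combining the duality identity $\varLambda H = H \varTheta^T$ (which has already been established in the course of the proof of Theorem~\ref{equality_generators}) with the Kolmogorov backward equation of the finite-state Markov chain $\{Z_t\}_{t \geqslant 0}$.

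First, I would invoke the Kolmogorov backward equation, which for any bounded function $f$ on the finite state space $E$ reads
\[
\frac{{\rm d}}{{\rm d}t}\, \E_\varphi\bigl[ f(Z_t) \mid Z_0 = z \bigr] = \E_\varphi\bigl[ (\varLambda f)(Z_t) \mid Z_0 = z \bigr],
\]
where $\varLambda$ is viewed as an operator via $(\varLambda f)(z) := \sum_{z' \in E} \varLambda_{zz'}\, f(z')$. Specialising to $f = \H{\A}{}$ sets up the calculation, since $\H{\A}{}$ is bounded.

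Second, I would apply the elementwise duality identity obtained while proving Theorem~\ref{equality_generators}, namely $(\varLambda \H{\A}{})(z) = \sum_{\B \in \PP(S)} \varThet{\A\B}{}\, \H{\B}{}(z)$. Substituting this into the backward equation and pulling the constants $\varThet{\A\B}{}$ out of the conditional expectation yields
\[
\frac{{\rm d}}{{\rm d}t}\, \E_\varphi\bigl[ \H{\A}{} (Z_t) \mid Z_0 = z \bigr] = \sum_{\B \in \PP(S)} \varThet{\A\B}{}\, \E_\varphi\bigl[ \H{\B}{} (Z_t) \mid Z_0 = z \bigr].
\]
Finally, averaging both sides against the law of $Z_0$ converts the conditional expectations into unconditional ones and produces the claimed ODE, since the coefficients $\varThet{\A\B}{}$ do not depend on $Z_0$.

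The main substantive step is the elementwise duality identity itself, but that work has already been carried out inside the proof of Theorem~\ref{equality_generators}, where $\sum_{z'} \varLambda_{zz'} \H{\A}{}(z')$ and $\sum_\B \varThet{\A\B}{}\, \H{\B}{}(z)$ were each shown equal to the common expression in Eq.~\eqref{ThetaH_last_line}. So no real obstacle remains here; the finiteness of both $E$ and $\PP(S)$ together with boundedness of $\H{\A}{}$ makes all interchanges of sums, derivatives, and expectations entirely routine.
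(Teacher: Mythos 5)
Your proposal is correct and matches the paper's (very terse) argument in substance: the paper likewise reduces the corollary to the generator-level identity $\sum_{z'}\varLambda_{zz'}\H{\A}{}(z')=\sum_{\B}\varThet{\A\B}{}\H{\B}{}(z)$ established inside the proof of Theorem~\ref{equality_generators}, combined with the standard semigroup calculus of Eq.~\eqref{semigroup} for a finite-state chain. Whether one phrases this as the Kolmogorov backward equation for $\{Z_t\}_{t\geqslant 0}$ (as you do) or as differentiating $\mathrm{e}^{t\varTheta}$ on the dual side makes no difference, and your final averaging over the law of $Z_0$ is the right way to pass to unconditional expectations.
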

This will be the basis for our concrete calculations in the next section.
 
 
\section{Applications and examples} \label{sec:examples}
Let us now apply our results to the cases of $n=2$ and $n=3$ sites.
Expectations will always be with respect to $\varphi$, so we will
abbreviate $\E_\varphi$ by $\E$ throughout. We will assume that
$Z_0=z$, i.e., that the initial population is deterministic.

\paragraph{Two sites.} 
 For $U = S = \{1,2\}$, with the abbreviation $r := r_{\{\{1\},\{2\}\}}$, 
the ODE system 
of Corollary~\ref{ODE_exp} reads
\begin{align}\label{H1}
	\frac{{\rm d}}{{\rm d}t} \, \E \big[ \H{\tts \{\{1,2\}\}}{}(Z_t) \big] & =  
r \, \frac{N - 1}{N} \, \E \big[ \big( \H{\tts \{\{1\},\{2\}\}}{} - \H{\tts \{\{1,2\}\}}{} \big)(Z_t) \big] \\
	\frac{{\rm d}}{{\rm d}t} \, \E \big[ \H{\tts \{\{1\},\{2\}\}}{} (Z_t) \big]  
 & = \frac{2}{N} \, \E \big[ \big( \H{\tts \{\{1,2\}\}}{} - \H{\tts \{\{1\},\{2\}\}}{} \big)(Z_t) \big], \notag
\end{align}
where we have dropped the upper index, which is always $U$.
It follows that
\begin{equation}\label{d}
  \frac{{\rm d}}{{\rm d}t} \, \E \big[ \big( \H{\tts \{\{1,2\}\}}{} - \H{\tts \{\{1\},\{2\}\}}{} \big)(Z_t) \big]  = 
- \left(\frac{2}{N} + r \, \frac{N - 1}{N} \right) 
\E \big[ \big( \H{\tts \{\{1,2\}\}}{} - \H{\tts \{\{1\},\{2\}\}}{} \big)(Z_t)\big].
\end{equation}
Since $L_{\{\{1,2\}\}} = \frac{N - 1}{N} \, ( \H{\tts \{\{1,2\}\}}{} - \H{\tts \{\{1\},\{2\}\}}{} )$, it follows
that the expected two-point LDE also decays at rate $2/N + r (N-1)/N$.
In the case of two alleles per site, an equivalent formula has appeared in
\citet[Ex.~1]{Polen2}. The corresponding result in the diffusion limit
goes back to \citet{OhtaKimura}, see also \citet[Chap.~8.2]{Durrett}. As
noted there, it may seem surprising that the correlations also decay via
resampling (even if $r = 0$); but recall that our Moran model with recombination
is an absorbing Markov chain where a single type goes to fixation in the
long run, that is, $Z_t$ will ultimately end up in a point measure.
 
The expected type distribution is now easily obtained from~\eqref{H1}~and~\eqref{d} via
\begin{align*} 
	\E \big[ \H{\tts \{\{1,2\}\}}{} (Z_t) \big] & = \E \big[ \H{\tts \{\{1,2\}\}}{} (Z_0) \big] - r \, \frac{N - 1}{N} \int_0^t \E \big[ \big( \H{\tts \{\{1,2\}\}}{} - \H{\tts \{\{1\},\{2\}\}}{} \big) (Z_\tau) \big] {\rm d} \tau \\  
 = \frac{Z_0}{N} & - \frac{r (N - 1)}{r (N - 1) + 2}  \, \Big(1 - \exp \Big(-  \frac{r (N - 1)+2}{N} \, t\Big) \Big) \E \big[ \big(\H{\tts \{\{1,2\}\}}{} - \H{\tts \{\{1\},\{2\}\}}{} \big) (Z_0) \big],
\label{solutionH2sites}
\end{align*}
where we have used that $\E \big[ \H{\tts \{\{1,2\}\}}{} (Z_0) \big]=Z_0/N$.
The asymptotic behaviour is given by
\begin{equation}\label{asymptotic} 
\lim_{t \to \infty} \E \left[\frac{Z_t}{N}\right] =  \frac{2}{2 + r (N - 1)} \, \frac{Z_0}{N}  + \frac{r (N - 1)}{2 + r (N - 1)} \, \H{\tts \{\{1\}\{2\}\}}{} (Z_0).
\end{equation}
Since $Z_t$ will ultimately absorb in a point measure, we also know that
\[
\lim_{t \to \infty} \E \left[\frac{Z_t}{N}\right] = 
\sum_{x \in \XX} \P[ Z_t \text{ absorbs  in } x] \, \delta_x,
\]
and thus
$\P[ Z_t \text{ absorbs  in } x] = \lim_{t \to \infty} \E[Z_t/N](x)$ for
all $x \in \XX$. We can therefore read off the fixation probabilities from~\eqref{asymptotic}. With probability 
$\frac{2}{2 + r (N - 1)}$ (the relative intensity of resampling),
the type that wins is drawn from the initial distribution. With probability $\frac{r (N - 1)}{2 + r (N - 1)}$ (the relative 
intensity of recombination), it is drawn from the  distribution
that results when the leading and the trailing segments are sampled from
the initial population without replacement. 

\paragraph{Three sites.} 
Now, consider $ U = S = \{1, 2, 3\}$, together with the abbreviations $r_1 = r_{\{\{1\}\{2, 3\}\}}$ and 
$r_2 = r_{\{\{1,2\}\{3\}\}}$. Let us  order the partitions of $\PP(U)$ 
as follows:
\[ \{\{1,2,3\}\}\quad \{\{1\},\{2,3\}\}\quad \{\{1,2\},\{3\}\}\quad \{\{1,3\},\{2\}\}\quad \{\{1\},\{2\},\{3\}\}.\]
The generator of the partitioning process then reads
\begin{equation} 
\varTheta = 
\left( \begin{smallmatrix}
- \frac{N - 1}{N} (r_1 + r_2) & \frac{N - 1}{N} r_1 & \frac{N - 1}{N} r_2 & 0 & 0 \\[1mm]
\frac{2}{N} - \frac{N - 1}{N^2} r_2 & - \frac{2}{N} - \frac{(N - 1)^2}{N^2} r_2 & \frac{N - 1}{N^2} r_2 & \frac{N - 1}{N^2} r_2 & \frac{(N - 1)(N - 2)}{N^2} r_2 \\[1mm]
\frac{2}{N} - \frac{N - 1}{N^2} r_1 & \frac{N - 1}{N^2} r_1 & - \frac{2}{N} - \frac{(N - 1)^2}{N^2} r_1 & \frac{N - 1}{N^2} r_1 & \frac{(N - 1)(N - 2)}{N^2} r_1 \\[1mm]
\frac{2}{N} - \frac{N - 1}{N^2} (r_1 + r_2) & \frac{N - 1}{N^2} (r_1 + r_2) & \frac{N - 1}{N^2} (r_1 + r_2) & - \frac{2}{N} - \frac{(N - 1)^2}{N^2}(r_1 + r_2) & \frac{(N - 1)(N - 2)}{N^2} (r_1 + r_2) \\[1mm]
0 & \frac{2}{N} & \frac{2}{N} & \frac{2}{N} & - \frac{6}{N}
\end{smallmatrix} \right). \label{Theta3}
\end{equation}
Recall that, by Corollary~\ref{ODE_exp}, we have 
$ \frac{\rm d}{{\rm d} t}\,  \E [ H (Z_t) ] = \varTheta \E [ H (Z_t) ]$, where $H (Z_t) := (\H{\A}{U} (Z_t))_{\A \in \PP(U)}^{}$.\\[2pt]
We now transform this system into a system in terms of correlation functions. 
Therefore, let $L (Z_t) = (\L{\A}{U} (Z_t))_{\A \in \PP(U)}^{}$. 
From~\eqref{L_in_H}, we know that $L(Z_t) = T H(Z_t)$, where the transformation matrix is given by
\[
T
= \textstyle{\frac{(N - 1) (N - 2)}{N^2}}
\left( \begin{smallmatrix}
1 & - 1 & - 1 & - 1 & 2 \\
\frac{1}{N - 2} & 1 + \frac{1}{N - 2} & \frac{-1}{N - 2} & \frac{-1}{N - 2} & - 1 \\
\frac{1}{N - 2} & - \frac{1}{N - 2} & 1 + \frac{1}{N - 2} & - \frac{1}{N - 2} & - 1 \\
\frac{1}{N - 2} & - \frac{1}{N - 2} & - \frac{1}{N - 2} & 1 + \frac{1}{N - 2} & - 1 \\
\frac{1}{(N - 1)(N - 2)} & \frac{1}{N - 2} & \frac{1}{N - 2} & \frac{1}{N - 2} & 1
\end{smallmatrix} \right).
\]
Consequently, $\frac{\rm d}{{\rm d} t}\, \E[L (Z_t)] = T \varTheta T^{-1} \, \E[L (Z_t)]$, where 
\begin{equation}
T \Theta T^{-1} = 
\left(\begin{smallmatrix}
	 - \frac{6}{N} - \frac{(N - 1) (N - 2)}{N^2} (r_1 + r_2) & 0 & 0 & 0 & 0\\
	\frac{2}{N} - \frac{(N - 1)}{N^2} (r_1 + r_2) & - \frac{2}{N} - \frac{N - 1}{N} r_2  & 0 & 0 & 0\\
	\frac{2}{N} - \frac{(N - 1)}{N^2} (r_1 + r_2) & 0 & - \frac{2}{N} - \frac{N - 1}{N} r_1 & 0 & 0\\
	\frac{2}{N} - \frac{(N - 1)}{N^2} (r_1 + r_2) & 0 & 0 & - \frac{2}{N} - \frac{N - 1}{N} (r_1 + r_2) & 0 \\
	- \frac{1}{N^2}(r_1 + r_2) & \frac{2}{N} - \frac{1}{N} r_2 & \frac{2}{N} - \frac{1}{N} r_1 & \frac{2}{N} - \frac{1}{N}(r_1 + r_2) & 0 
\end{smallmatrix} \right).\label{UTUinv}
\end{equation}
In contrast to~\eqref{Theta3}, the matrix $T \varTheta T^{-1}$ has a nice subtriangular structure, from which
we can already read off that the \emph{expected three-point LDE} 
$\E [ \L{\tts \{\{1,2,3\}\}}{}(Z_t) ]$ (cf.~\eqref{LDEexplicit}) decays exponentially according to
\[
 \frac{\rm d}{{\rm d} t} \, \E \big[ \L{\tts\{\{1,2,3\}\}}{}(Z_t) \big] = 
- \Bigg ( \frac{6 N + (N - 1) (N - 2) (r_1 + r_2)}{N^2} \Bigg ) \, \E \big[ \L{\tts\{\{1,2,3\}\}}{}(Z_t) \big].
\]
As in the case of two sites, the decay rate contains contributions from
resampling as well as from recombination. To extract more information, we recast
$T \varTheta T^{-1}$ into the diagonal form $V^{-1} T \varTheta T^{-1} V = D$,
where the entries of the diagonal matrix $D$ are those on the diagonal
of $T \varTheta T^{-1}$, i.e., its eigenvalues. Consequently, 
$ \frac{\rm d}{{\rm d} t} \, V^{-1} \, \E[L(Z_t)]= D V^{-1} \, \E[L(Z_t)]$.

With the help of  the subtriangular structure of 
$T \varTheta T^{-1}$, the matrix $V^{-1}$ can be calculated explicitly.
It is again subtriangular, but somewhat unwieldy.
To streamline the results, we now turn to the
diffusion limit, with generator $\varThetpp{}$ of Definition~\ref{margARG}.
Then $T$ and $T^{-1}$ converge to matrices $T''$ and $(T'')^{-1}_{}$, respectively, with elements $T_{\A \B}'' = \mu(\B, \A) \, \delta_{\B \< \A}^{}$ and $(T'')^{-1}_{\A \B} = \delta_{\B \< \A}^{}$, $\A,\B \in \PP(U)$ (the latter is due to inverison
from below). This yields 
\[
T'' \varThetpp{} (T'')^{-1} =
\left(\begin{smallmatrix}
	- (6 + \varrho_1 + \varrho_2) & 0 & 0 & 0 & 0\\
	2 & - (2 + \varrho_2) & 0 & 0 & 0\\
	2 & 0 & - (2 + \varrho_1) & 0 & 0\\
	2 & 0 & 0 & - (2 + \varrho_1 + \varrho_2) & 0 \\
	0 & 2 & 2 & 2 & 0 
\end{smallmatrix}\right), \label{UTUinvDiffLim}
\]
where $ \varrho_i = \lim_{N \to \infty} N r_i$, $i=1,2$. Note that the
rescaling of time has already been absorbed in $\varThetpp{}$.
In  place of $V^{-1}$, we now get
\begin{equation*}
(V'')^{-1} = 
\left( \begin{smallmatrix}
	1 & 0 & 0 & 0 & 0 \\
	\frac{ 2}{(2 + \varrho_2) (4 + \varrho_1)} & \frac{ 1}{2 + \varrho_2} & 0 & 0 & 0 \\
	\frac{ 2}{(2 + \varrho_1) (4 + \varrho_2)} & 0 & \frac{ 1}{2 + \varrho_1} & 0 & 0 \\
	\frac{ 1}{2 (2 + \varrho_1 + \varrho_2)} & 0 & 0 & \frac{ 1}{2 + \varrho_1 + \varrho_2} & 0 \\
	\frac{4 (\varrho_1 \varrho_2 + (2 + \varrho_1 + \varrho_2) (6 + \varrho_1 + \varrho_2))}{(2 + \varrho_1) (2 + \varrho_2) (2 + \varrho_1 + \varrho_2) (6 + \varrho_1 + \varrho_2)} & \frac{2}{2 + \varrho_2} & \frac{2}{2 + \varrho_1} & \frac{2}{2 + \varrho_1 + \varrho_2} & 1
\end{smallmatrix} \right), \label{VinvDiffLim}
\end{equation*}
which diagonalises $T'' \varThetpp{} {(T'')}^{-1}$. 
This shows that, in contrast to $\lvert U\rvert =2$, the linear combinations of $\E[\L{\A}{}(Z_t)]$'s that
decay exponentially have coefficients  
depending on the recombination rates (with exception of 
$\E [ \L{\tts\{\{1,2,3\}\}}{} (Z_t)]$). As an example,
$(4 + \varrho^{}_1)\, \E [ \L{\tts\{\{1\}\{2,3\}\}}{} (Z_t)] + 2 \, \E [\L{\tts\{\{1,2,3\}\}}{} (Z_t)]$ is one such combination and decays at rate
$2 + \varrho^{}_2$. 
Solution of the complete system is still possible due to the
triangular structure; however, it is somewhat tedious since it involves
the linear combination given in the last line of $(V'')^{-1}$. Further
progress may be possible if alternative scalings are employed, 
such as the  \emph{loose linkage approach} suggested recently by 
\citet{JenkinsFearnheadSong}.


\section{Conclusion}
Let us summarise our findings. We have described a marginal
ancestral recombination process (ARP) and proved a duality result that relates
the ARP with the Moran model forward in time, via so-called sampling
functions. This was achieved by extending the recombinator formalism,
which had previously proved useful in the context of deterministic
recombination equations, to the stochastic setting. The ARP, together
with the duality result, reveals the genealogical structure hidden in
the work of \citet{Polen}, who approached the matter by functional-analytic means and forward in time. 
It also leads to an explicit and closed system of ordinary differential equations for the expected
sampling functions, from which the expected linkage disequilibria
of all orders can be calculated. It is quite remarkable that such a closed ODE system
exists: after all, the sampling functions are nonlinear, and the
attempt to write down the differential equation for the expectation of
a nonlinear quantity usually results in a hierarchy of equations that
does not close; see \citet{BaakeHustedt} for more on the moment closure problem
in the case of recombination. We would like to emphasise
that the favourable structure is due to the marginalisation, 
which gives efficient access to correlation functions, but not to variances,
for example.

Unlike \citet{Polen}, we have not included mutation so far. However, since
mutation acts independently of recombination, it should be straightforward to
superimpose it on the population process as well as the partitioning process. It will be
rewarding to study the interplay of mutation (which increases LDE)
with recombination and resampling (which decrease LDE) within the
framework established here.

\subsection*{Acknowledgements}
It is our pleasure to thank Noemi Kurt, Cristian Giardina, and
Frank Redig for a primer to duality theory, Fernando Cordero for
helpful discussions, and Michael Baake for
his help to improve the manuscript. The authors gratefully acknowledge
the support from the Priority Programme \emph{Probabilistic Structures in
Evolution (SPP 1590)}, which is funded by Deutsche Forschungsgemeinschaft
(German Research Foundation, DFG).

\bibliographystyle{plainnat} 
\renewcommand{\bibfont}{\small}

%

\end{document}